\newtheorem{Th}{Theorem}[section]
\newtheorem{Cor}{Corollary}[section]
\newtheorem{Lemma}{Lemma}[section]
\newtheorem{Def}{Definition}[section]
\newtheorem{Rem}{Remark}[section]
\newcommand{\E}{\mathrm{e}}
\newcommand{\I}{\mathrm{i}}
\newcommand*{\mailto}[1]{\href{mailto:#1}{\nolinkurl{#1}}}
\newcommand{\arxiv}[1]{\href{http://arxiv.org/abs/#1}{arXiv:#1}}
\numberwithin{equation}{section}
\begin{document}
\title[Uncertainty Princ. for disc. Schr\"odinger evolution on graphs]{Uncertainty principle for   discrete Schr\"odinger evolution on  graphs}
\author[I. Alvarez-Romero]{Isaac Alvarez-Romero}
\address{Department of Mathematical Sciences,
Norwegian University of
Science and Technology, NO--7491 Trondheim, Norway}
\email{\mailto{isaac.romero@math.ntnu.no}\\
\mailto{isaacalrom@gmail.com}}

\thanks{{\it Research supported by the Norwegian Research Council project DIMMA 213638.}}

\keywords{graphs, Schr\"odinger evolution, unique solution, scattering, growth of entire functions}

\begin{abstract}
We consider the Schr\"odinger evolution on graph, i.e. solution to the equation $\partial_tu(t,\alpha)=i\sum_{\beta\in\mathcal{A}}L(\alpha,\beta)u(t,\beta)$, here $\mathcal{A}$ is the set of vertices of the graph and the matrix $(L(\alpha,\beta))_{\alpha,\beta\in\mathcal{A}}$ describes interaction between the vertices, in particular two vertices $\alpha$ and $\beta$ are connected if $L(\alpha,\beta)\neq0$. We assume that the graph has a "web-like" structure, i.e, it  consists of an inner part, formed by a finite number of vertices, and some threads attach to it.\\
We prove that such solution $u(t,\alpha)$ cannot decay too fast along one thread
at two different times, unless it vanishes at this thread.\\
We also  give a characterization of the dimension of the vector space formed by all the solutions of  $\partial_tu(t,\alpha)=i\sum_{\beta\in\mathcal{A}}L(\alpha,\beta)u(t,\beta)$ when $\mathcal{A}$ is a finite set, in terms of the number of the different eigenvalues of the matrix $L(\cdot,\cdot)$
\end{abstract}

\maketitle

\section{Introduction}

The Hardy Uncertainty Principle has been studied by several authors in the continuos case in  recent years, see for instance \cite{CEKPV,EKPV} and the references therein. This Principle can be formulated in terms of the dynamic version for the free  Schr\"odinger equation: let  $u(t,x)$ be a solution of
\begin{equation}
\partial_t u=\I \Delta u
\end{equation}
and $|u(0,x)|=O(\E^{-x^2/\beta^2})$, $|u(1,x)|=O(\E^{-x^2/\alpha^2})$, with $1/\alpha\beta>1/4$, then $u\equiv 0$ and if $1/\alpha\beta=1/4$, then the initial data is a constant multiple of $\E^{-(1/\beta^2+\I/4)x^2}$. A similar result is given in \cite{JLMP,FB} for the discrete case, that is when $\Delta u(t,n)=u(t,n-1)-2u(t,n)+u(t,n+1)$ is the discrete Laplacian and $n\in\mathbb{Z}$, $t\in[0,1]$. Such  type of operators appears , for instance, on the study of the quantum graphs, see for example \cite{BK,K} and the references therein. \\ \\
The aim of the present paper is to study the uniqueness of solutions for the discrete Schr\"odinger evolution on connected graphs. We suppose that the graphs have a  "web-like" structure, that is, there exists a central part $\mathcal{A}_1$, which consits of a finite number of vertices, and some threads attached to $\mathcal{A}_1$.  We denote by $\mathcal{A}$ the set of all vertices, a detailed description is given  in section \ref{section22}.\\
These systems appear, for example, when one considers a system of particles interacting with each other and perhaps an external field, see \cite{LM,LM2}. These interactions are described by the matrix
\begin{equation*}
\big(L(\alpha,\beta)\big)_{\alpha,\beta\in\mathcal{A}}
\end{equation*}
This matrix is symmetric and real-valued. The operator $\big(L(\alpha,\beta)\big)_{\alpha,\beta\in\mathcal{A}}:\l^2(\mathcal{A})\to\l^2(\mathcal{A})$ is related to the Hessian matrix of the potential energy function near the equilibrium position of the particles, thus $L(\cdot,\cdot)$ is a positive and selfadjoint operator.\\
There is a  graph  that describes these systems:  the vertices play the role of the particles and the edges describe the interactions, that is, there is an edge $(\alpha,\beta)$ if $\alpha\neq\beta$ and  the particle $\alpha$ interacts with $\beta$,  i.e.  $L(\alpha,\beta)\neq 0$.  
The evolutions on such graphs are described by functions $u(t,\alpha)$, $t\in[0,1]$, $\alpha\in\mathcal{A}$ and they meet the equation
\begin{equation}\label{introproblem}
\partial_tu(t,\alpha)=\I\sum_{\beta\in\mathcal{A}}L(\alpha,\beta)u(t,\beta),\qquad \alpha\in\mathcal{A},\qquad t\in[0,1]
\end{equation}
We will show that if a solution of \eqref{introproblem} decays sufficiently fast on one thread at two different times, then the solution is trivial on the whole thread. To this end we will combine techniques on scattering theory on such graphs, developed in \cite{LM,MS}  and techniques of  the growth of entire functions,  present e.g. in  \cite{Levin}, to follow a similar strategy as it was done in \cite{JLMP} in theorem 2.3, where it was  proven  that if a solution $u(t,n)$ of the problem
\begin{equation}\label{introproblemJLMP}
\partial_tu(t,n)=\I(\Delta u(t,n) +V(n)u(t,n)),\qquad n\in\mathbb{Z},\qquad t\in[0,1]
\end{equation}
decays sufficiently fast on one side at two different times, then the solution is trivial, where $\Delta u(n)$ is the discrete Laplacian and $V(n)$ is a compactly supported real valued function.\\
In \cite{ART}, this result was improved by letting  $L$ be a Jacobi operator: 
\begin{equation*}
Lf(n)=-b(n-1)f(n-1)+a(n)f(n)-b(n)f(n+1),\qquad n\in\mathbb{Z}
\end{equation*}
such that the sequences $a,b$ fullfill  certain decay conditions as $n\to\pm\infty$.\\  \\
When we have several threads, in general we cannot assure that the function $u(t,\alpha)$ is trivial in the whole system, once we know it is zero on one thread. This simple but very important fact is a big difference with \eqref{introproblemJLMP}.  One of the reasons of this issue is because of the inner part $\mathcal{A}_1$. Motivated by this fact, we will restrict \eqref{introproblem}  to the case when $\mathcal{A}$ is a finite set and  show some cases  when it is possible to extend the solution to the whole system: \\
\begin{enumerate}
\item
Either if we  know the behaviour of the solution $u(t,\alpha)$ along the threads. In fact, if we know the solution on all the threads, except one, and we can extend the graph formed by these threads to the whole system, then we know the solution on the whole system, a detailed description is given in corollary \ref{cor43}.\\
\item
Either if we know the solution on the central part $\mathcal{A}_1$ and there is an extension of $\mathcal{A}_1$ to the whole system, then $u(t,\alpha)$ is uniquely determined for all $\alpha\in\mathcal{A}$, see corollary \ref{cor42}
\end{enumerate}
These ideas are based on \cite{MS}.\\ \\
The paper is organized as follows: in section \ref{Preli} we give some brief notions of the growth of entire functions, see \cite{Levin}, and some results of  the scattering problem on the considered graphs, see \cite{LM}. We need them to prove in section \ref{sec3} our result on the uniqueness of the solution of \eqref{introproblem} on one thread, that is, when  the solution decays sufficiently fast at two different times on that thread.\\
In section \ref{section4} we study the problem \eqref{introproblem} restricted to finite graphs and we give a complete characterization of the dimension of the vector space formed by all the solutions $u(t,\alpha)$ in terms of the number of the different eigenvalues of the matrix $L(\cdot,\cdot)$. To this end, we need the concept of the extension of a subgraph developed in \cite{LM2} and \cite{MS},  in chapter 12.

\section{Preliminaries}\label{Preli}

\subsection{Growth of entire functions}
We will give some brief notions of the growth of the entire functions and some results related with it, all of them can be found in \cite{Levin}  in lectures 1 and 8.\\ \\
Let $f$ be an entire function, we say that $f$ is of \emph{exponential type } $\sigma_f$, if for  some constants $k,C>0$  we have
\begin{equation}\label{2fdecay}
|f(z)|<C\E^{k|z|},\qquad z\in\mathbb{C}
\end{equation}
and $\sigma_f$ is defined as
\begin{equation*}
\sigma_f=\limsup_{r\to\infty}\frac{\log\max\{|f(r\E^{i\varphi})|:\text{ }\varphi\in[0,2\pi]\}}{r}
\end{equation*}
It follows from the definition of $\sigma_f$ 
\begin{equation}
\sigma_{fg}\leq \sigma_f+\sigma_g
\end{equation}
and
\begin{equation}\label{sumasigma}
\sigma_{f+g}\leq\max\{\sigma_f,\sigma_g\}
\end{equation}
where $f,g$ are entire functions of exponential type $\sigma_f,\sigma_g$ respectively.
\begin{Th}\label{Thtypeformula}
Let $f=\sum_{n\geq 0}c_nz^n$ be an entire function of exponential type, then $\sigma_f$ is determined by the formula
\begin{equation}
\E\sigma_f=\limsup_{n\to\infty}\big(n|c_n|^{1/n}\big)
\end{equation}
\end{Th}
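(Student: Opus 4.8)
The plan is to establish the two inequalities $\limsup_{n\to\infty}n|c_n|^{1/n}\le\E\sigma_f$ and $\E\sigma_f\le\limsup_{n\to\infty}n|c_n|^{1/n}$ separately, working throughout with $M(r):=\max\{|f(r\E^{\I\varphi})|:\varphi\in[0,2\pi]\}$, so that by definition $\sigma_f=\limsup_{r\to\infty}\frac{\log M(r)}{r}$, and both quantities are finite because $f$ is assumed of exponential type.

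For the first inequality I would start from the Cauchy estimate $|c_n|\le M(r)r^{-n}$, valid for every $r>0$. Fixing $\varepsilon>0$, the definition of $\sigma_f$ together with the continuity of $M$ on $[0,R_\varepsilon]$ gives a constant $C_\varepsilon>0$ with $M(r)\le C_\varepsilon\E^{(\sigma_f+\varepsilon)r}$ for all $r\ge0$, hence $|c_n|\le C_\varepsilon\E^{(\sigma_f+\varepsilon)r}r^{-n}$. Choosing $r$ to minimise the right-hand side, an elementary computation (the minimum is at $r=n/(\sigma_f+\varepsilon)$) yields $|c_n|\le C_\varepsilon\big(\E(\sigma_f+\varepsilon)/n\big)^n$. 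Taking $n$-th roots, multiplying by $n$, and letting $n\to\infty$ gives $\limsup_n n|c_n|^{1/n}\le\E(\sigma_f+\varepsilon)$; since $\varepsilon>0$ is arbitrary, $\limsup_n n|c_n|^{1/n}\le\E\sigma_f$.

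For the converse, set $\lambda:=\frac1\E\limsup_n n|c_n|^{1/n}$ and fix $\varepsilon>0$; then there is $N$ with $|c_n|\le\big(\E(\lambda+\varepsilon)/n\big)^n$ for all $n\ge N$. Writing $a:=\E(\lambda+\varepsilon)$ and using $M(r)\le\sum_{n\ge0}|c_n|r^n$, I would split the series into the polynomial part $P(r)=\sum_{n<N}|c_n|r^n$ and the tail $\sum_{n\ge N}(ar/n)^n$. The tail is controlled by a standard two-step estimate: the function $n\mapsto(ar/n)^n$ attains its maximum $\E^{ar/\E}$ near $n\approx ar/\E$, while for $n\ge2ar$ consecutive terms decrease at least geometrically, so the whole tail is bounded by $C\,r\,\E^{ar/\E}$ with $C$ independent of $r$. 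Consequently $\log M(r)\le \frac{a}{\E}\,r+O(\log r)$, i.e. $\sigma_f\le a/\E=\lambda+\varepsilon$, and letting $\varepsilon\to0$ gives $\sigma_f\le\lambda$. Combining the two inequalities yields $\E\sigma_f=\limsup_n n|c_n|^{1/n}$.

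I expect the only delicate point to be the tail estimate in the converse: one must bound the full sum $\sum_{n\ge N}(ar/n)^n$ — not just its largest term — uniformly in $r$, which is precisely where the geometric-decay argument for the range $n\ge2ar$ is needed. Everything else is calculus, together with the continuity of $M(r)$ used to absorb small radii into the constant $C_\varepsilon$.
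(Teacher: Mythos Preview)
Your argument is correct and is the standard textbook proof of this classical formula (essentially the one found in Levin, Lecture~1). Note, however, that the paper does not supply its own proof of this statement: Theorem~\ref{Thtypeformula} is quoted in the Preliminaries as a known result from \cite{Levin}, so there is nothing to compare against beyond observing that what you wrote is precisely the argument one finds there.
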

Let $f$ be an entire function, it may happen that it does not growth with the same velocity along all directions. To this end we introduce the \emph{indicator function} $h_f$
\begin{equation*}
h_f(\varphi)=\limsup\frac{\log|f(r\E^{i\varphi})|}{r}
\end{equation*}
where $\varphi\in[0,2\pi]$ is the direction we are looking at, that is $\text{arg}(z)=\varphi$.\\
\begin{Def}
A function $K(\theta)$ is called trigonometrically convex on the closed segment $[\alpha,\beta]$ if for $\alpha\leq \theta_1<\theta_2\leq\beta$, $0<\theta_2-\theta_1<\pi$ we have
\begin{equation*}
K(\theta)\leq\frac{K(\theta_1)\sin(\theta_2-\theta)+K(\theta_2)\sin(\theta-\theta_1)}{\sin(\theta_2-\theta_1)},\qquad \theta_1\leq\theta\leq\theta_2
\end{equation*}
\end{Def}
\begin{Lemma}
Let $h(\varphi)$ be a trigonemetrically convex function on the segment $[\alpha,\beta]$. Then
\begin{equation*}
h(\varphi)+h(\varphi+\pi)\geq 0,\qquad \alpha\leq\varphi<\varphi+\pi\leq\beta
\end{equation*}
\end{Lemma}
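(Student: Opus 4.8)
The plan is to reduce the claimed inequality to a single application of the defining inequality of trigonometric convexity, evaluated at a fixed interior point of $[\varphi,\varphi+\pi]$, followed by a limiting argument that lets the length of the test interval increase up to $\pi$ (which is forbidden in the definition itself, hence the need for a limit).

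First I would fix $\varphi$ with $\alpha\le\varphi<\varphi+\pi\le\beta$ and set $\psi=\varphi+\tfrac{\pi}{2}$, an interior point of $[\varphi,\varphi+\pi]$. For every $\varepsilon\in(0,\tfrac{\pi}{2})$ the three points $\theta_1=\varphi$, $\theta=\psi$, $\theta_2=\varphi+\pi-\varepsilon$ satisfy $\theta_1<\theta<\theta_2$ and $0<\theta_2-\theta_1=\pi-\varepsilon<\pi$, so they are admissible in the definition of trigonometric convexity. Substituting them and using $\sin(\theta_2-\theta)=\sin(\tfrac{\pi}{2}-\varepsilon)=\cos\varepsilon$, $\sin(\theta-\theta_1)=\sin\tfrac{\pi}{2}=1$ and $\sin(\theta_2-\theta_1)=\sin(\pi-\varepsilon)=\sin\varepsilon>0$ yields
\begin{equation*}
h\!\left(\varphi+\tfrac{\pi}{2}\right)\sin\varepsilon\le h(\varphi)\cos\varepsilon+h(\varphi+\pi-\varepsilon).
\end{equation*}
Then I would let $\varepsilon\to0^{+}$: the left-hand side tends to $0$, $\cos\varepsilon\to1$, and $h(\varphi+\pi-\varepsilon)\to h(\varphi+\pi)$, so that $0\le h(\varphi)+h(\varphi+\pi)$, which is exactly the assertion. (Equivalently, keeping the evaluation point a free interior point $\varphi+s$ with $0<s<\pi$, the same computation gives in the limit $0\le\bigl(h(\varphi)+h(\varphi+\pi)\bigr)\sin s$, and one divides by $\sin s>0$.)

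The only delicate step is the passage to the limit, namely the fact that $h(\varphi+\pi-\varepsilon)\to h(\varphi+\pi)$ — harmless when $\varphi+\pi$ is interior to $[\alpha,\beta]$, but requiring left-continuity at $\beta$ in the boundary case $\varphi+\pi=\beta$. This is covered by the continuity of trigonometrically convex functions on the closed segment, the trigonometric analogue of the continuity of ordinary convex functions established in the same lectures of \cite{Levin}; moreover in every application in this paper $h$ is the indicator function $h_f$ of an entire function of exponential type, which is continuous on all of $\mathbb{R}$, so the subtlety does not actually occur. I expect this continuity input to be the main (and essentially the only) nontrivial ingredient.
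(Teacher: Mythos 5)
Your argument is correct, and there is in fact no proof in the paper to compare it with: the lemma appears in Section 2.1 as a preliminary quoted from Levin's lectures \cite{Levin}, so your write-up serves as a self-contained substitute for that citation, and it follows the standard elementary route (apply the defining inequality on $[\varphi,\varphi+\pi-\varepsilon]$ at the midpoint-type node $\varphi+\tfrac{\pi}{2}$ to dodge the restriction $\theta_2-\theta_1<\pi$, then let $\varepsilon\to0^+$). One refinement on the single delicate step you flag: you do not need continuity of trigonometrically convex functions, nor the continuity of the indicator $h_f$. From your inequality all that is required is $h(\varphi+\pi)\geq\liminf_{\varepsilon\to0^+}h(\varphi+\pi-\varepsilon)$, and this follows from the definition itself: apply trigonometric convexity with $\theta_1=\varphi+\tfrac{\pi}{2}$, $\theta_2=\varphi+\pi$ (so $\theta_2-\theta_1=\tfrac{\pi}{2}<\pi$) and let $\theta\to\theta_2^-$; since $\sin(\theta_2-\theta)\to0$ and $\sin(\theta-\theta_1)\to\sin(\theta_2-\theta_1)$, one gets $\limsup_{\theta\to(\varphi+\pi)^-}h(\theta)\leq h(\varphi+\pi)$, which is exactly the needed bound. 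This removes the only external input and settles the boundary case $\varphi+\pi=\beta$, which is precisely where endpoint continuity could conceivably fail (as with ordinary convex functions, only an upward jump at an endpoint is possible, and that works in your favour). The only other implicit assumption is that $h$ is finite at $\varphi$ and $\varphi+\tfrac{\pi}{2}$, which is automatic for the indicator functions used in the paper.
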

\begin{Th}\label{thindicator1}
Let $f(z)$ be an entire function of exponential type. Then its indicator function $h_f$ is a trigonometrically convex function.
\end{Th}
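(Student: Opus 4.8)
The plan is to recognize the trigonometric--convexity inequality as an instance of the Phragm\'en--Lindel\"of principle in a sector, obtained by comparing the subharmonic function $\log|f|$ with a harmonic majorant (this is the classical argument, cf. \cite{Levin}, Lecture~8). Since $f$ is of exponential type there are $C,k>0$ with $|f(z)|\le C\E^{k|z|}$, so $h_f(\varphi)\le k$ for all $\varphi$; in particular $h_f$ is bounded above, and we may assume $f\not\equiv 0$. Fix $\theta_1<\theta_2$ with $0<\theta_2-\theta_1<\pi$. It suffices to show that for any reals $h_1>h_f(\theta_1)$ and $h_2>h_f(\theta_2)$ the interpolant
\begin{equation*}
H(\theta)=\frac{h_1\sin(\theta_2-\theta)+h_2\sin(\theta-\theta_1)}{\sin(\theta_2-\theta_1)}
\end{equation*}
dominates $h_f$ on $[\theta_1,\theta_2]$; letting $h_1\downarrow h_f(\theta_1)$ and $h_2\downarrow h_f(\theta_2)$ (note $H$ depends continuously and monotonically on $h_1,h_2$ on this interval) then yields the stated inequality, and since $\theta_1,\theta_2$ are arbitrary, the trigonometric convexity of $h_f$.

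The key observation is that $H(\theta)=A\cos\theta+B\sin\theta$ for suitable $A,B$, so $r\,H(\theta)$ is harmonic; indeed $r\,H(\theta)=\mathrm{Re}(c_0z)$ for some $c_0\in\mathbb{C}$, with $H(\theta_j)=h_j$ on the two bounding rays. Set $S=\{z=r\E^{\I\theta}:r\ge 0,\ \theta_1\le\theta\le\theta_2\}$ and consider
\begin{equation*}
v(z)=\log|f(z)|-\mathrm{Re}(c_0z)=\log|f(z)\E^{-c_0z}|,
\end{equation*}
which is subharmonic in the interior of $S$ (a subharmonic function minus a harmonic one). On the bounding ray $\theta=\theta_j$ we have $v(r\E^{\I\theta_j})=\log|f(r\E^{\I\theta_j})|-h_jr$, and since $\limsup_{r\to\infty}r^{-1}\log|f(r\E^{\I\theta_j})|=h_f(\theta_j)<h_j$, there is $R$ with $v\le 0$ for $r\ge R$ on these rays, while $v$ is bounded on $\{0\le r\le R\}$; hence $v\le M$ on $\partial S$ for some constant $M$. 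Moreover $|f(z)\E^{-c_0z}|\le C\E^{(k+|c_0|)|z|}$, so $v(z)\le C'|z|$ throughout $S$, i.e. $v$ has order at most $1$ in $S$.

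Now I would invoke the Phragm\'en--Lindel\"of principle for the sector $S$, whose opening is $\gamma:=\theta_2-\theta_1<\pi$. Since the critical order of such a sector is $\pi/\gamma>1$, a function subharmonic in the interior of $S$, of order at most $1$, and $\le M$ on $\partial S$, is $\le M$ throughout $S$: rotate $S$ to be symmetric about $\mathbb{R}_+$ and, for $\rho\in(1,\pi/\gamma)$, compare $v$ with $M+\varepsilon\,\mathrm{Re}(\zeta^{\rho})$, which majorizes $v$ on $\partial S$ and tends to $+\infty$ uniformly as $|z|\to\infty$ in $S$; the maximum principle on large truncated sectors followed by $\varepsilon\to 0$ gives the bound. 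Consequently $v\le M$ in $S$, and therefore, for every $\theta\in[\theta_1,\theta_2]$,
\begin{equation*}
h_f(\theta)-H(\theta)=\limsup_{r\to\infty}\frac{v(r\E^{\I\theta})}{r}\le\limsup_{r\to\infty}\frac{M}{r}=0,
\end{equation*}
that is, $h_f(\theta)\le H(\theta)$. Passing to the limit in $h_1,h_2$ completes the proof.

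The genuinely delicate step is the Phragm\'en--Lindel\"of application: it relies on the opening being strictly smaller than $\pi$, so that order-one growth is \emph{subcritical} for $S$ --- this is exactly where the restriction $0<\theta_2-\theta_1<\pi$ in the definition of trigonometric convexity is used. The only other point that needs a word is the degenerate case $h_f(\theta_j)=-\infty$ for some $j$: there one runs the same comparison with an arbitrary finite $h_j$ and lets $h_j\to-\infty$, which forces $h_f\equiv-\infty$ on $[\theta_1,\theta_2)$ and makes the inequality trivially true on that interval, while the remaining endpoint is covered by the non-degenerate argument applied from the other side.
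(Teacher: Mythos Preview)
Your proof is correct and follows the classical Phragm\'en--Lindel\"of argument. Note, however, that the paper does not supply its own proof of this theorem: it is stated in the preliminaries with an explicit reference to \cite{Levin}, Lectures~1 and~8, and the subsequent Remark identifies the Phragm\'en--Lindel\"of theorem as ``the key part of the proof.'' Your approach is therefore exactly the one the paper defers to.

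One cosmetic point: where you write that $v$ is ``bounded on $\{0\le r\le R\}$'' on the boundary rays, you mean bounded \emph{above} (since $\log|f|=-\infty$ at zeros of $f$); this is harmless for the maximum-principle step but worth stating precisely.
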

As a consequence we note
\begin{Cor}
Let $f(z)$ be an entire function of exponential type, then
\begin{equation}\label{hineq}
h_f(\varphi)+h_f(\varphi+\pi)\geq 0
\end{equation}
\end{Cor}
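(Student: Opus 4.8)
The plan is to obtain \eqref{hineq} as essentially a one-line consequence of the two results immediately preceding it. First I would invoke Theorem \ref{thindicator1}: since $f$ is entire of exponential type, its indicator function $h_f$ is trigonometrically convex. Because $h_f(\varphi)=\limsup_{r\to\infty} r^{-1}\log|f(r\E^{\I\varphi})|$ depends on $\varphi$ only through $\E^{\I\varphi}$, the function $h_f$ is $2\pi$-periodic and is trigonometrically convex on every closed segment; in particular on any segment of the form $[\varphi,\varphi+\pi]$.

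Next, fixing an arbitrary $\varphi\in[0,2\pi]$, I would apply the Lemma preceding \eqref{hineq} to the trigonometrically convex function $h_f$ on the segment $[\alpha,\beta]=[\varphi,\varphi+\pi]$. The hypothesis $\alpha\le\varphi<\varphi+\pi\le\beta$ of that Lemma is satisfied, with equality at both endpoints, so its conclusion reads precisely
\[
h_f(\varphi)+h_f(\varphi+\pi)\ge 0 .
\]
Since $\varphi$ was arbitrary and $h_f$ is $2\pi$-periodic — so that $h_f(\varphi+\pi)$ is unambiguous even when $\varphi+\pi>2\pi$ — this establishes the Corollary.

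There is essentially no obstacle here: the only point deserving a word of care is the interplay with the periodic domain, which is handled either by invoking the periodicity of $h_f$ as above, or by restricting to $\varphi\in[0,\pi]$ and reading off the values at $\varphi+\pi\in[\pi,2\pi]$ from the single segment $[0,2\pi]$. If one preferred an argument that does not quote the Lemma, one could instead write out the defining inequality of trigonometric convexity for $h_f$ on a short arc around $\varphi$ and let the two sample directions tend to $\varphi$ and $\varphi+\pi$, passing to the limit; but since the Lemma is already available, this detour is unnecessary.
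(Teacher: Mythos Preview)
Your proposal is correct and matches the paper's approach: the Corollary is stated there simply ``as a consequence'' of the preceding Lemma and Theorem~\ref{thindicator1}, with no further argument given, and your write-up just spells out that immediate deduction.
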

\begin{Rem}
\begin{enumerate}
\item
Notice that if $f(z)$ is a function such that  fullfills \eqref{2fdecay} and has  a finite number of poles, then  there exists a polinomial $P(z)$ such that $\tilde{f}(z)=f(z)P(z)$ is an entire function and
\begin{equation*}
\sigma_{\tilde{f}}=\sigma_f,\qquad h_{\tilde{f}}=h_f
\end{equation*}
\item
In addition if $f(z)$ is an analitic function on $\mathbb{C}\setminus\overline{\mathbb{D}_r}$, here $\overline{\mathbb{D}_r}:=\{z\in\mathbb{C}:\text{ }|z|\leq r\}$, then all the definitions upper mentioned can be extended to this kind of functions. Moreover,  since the key part of  the proof of theorem \ref{thindicator1} is the Phragm\'en-Lindel\"of theorem, one can easily adapt the proof of Theorem 1 from Chapter in \cite{Levin} to show that it continuos to hold in this case. In particular, inequality \eqref{hineq} is still true in this case
\end{enumerate}
\end{Rem}
%%%%%%%%%%%%%%%%%%%%%
%%%%%%%%%%%%%%%%%%%%%
%%%%%%%%%%%%%%%%%%%%%

\subsection{Direct Multichannel Scattering Problem}\label{section22}
The detailed description of this problem is given in \cite{LM}. In this subsection we repeat it in order to introduce notation and also to make our exposition self-contained.\\ \\
Consider a set of particles $\mathcal{A}$ and study small oscillations around its equilibrium position. The particles interacts each other and possibily with an external field. This problem is reduced to  the spectral problem
\begin{equation}\label{SpectralL}
Lx=\lambda x
\end{equation}
Here $L:\l^2(\mathcal{A})\to\l^2(\mathcal{A})$ is a selfadjoint operator, symmetric,  real-valued and positive with matrix $(L(\alpha,\beta))_{\alpha,\beta\in\mathcal{A}}$. In the sequel we will not distinguish between the operator $L$ and its corresponding matrix $L(\cdot,\cdot)$.\\ \\
Let $\alpha,\beta$ be two particles, we say that they interact if $L(\alpha,\beta)\neq 0$.  The particles are distributed  in a finite set $\mathcal{A}_1$ and a finite set of channels which are attached to this set $\mathcal{A}_1$
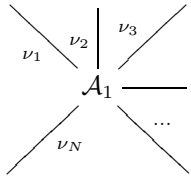
\begin{figure}[H]
\begin{displaymath}
\xymatrix{& & \\ & \mathcal{A}_1 \ar@{-}[ul]^{\nu_1}\ar@{-}[u]^{\nu_2}\ar@{-}[ur]^{\nu_3}\ar@{-}[dl]^{\nu_N}\ar@{-}[r]\ar@{-}[dr]^{\dots}& \\& & }
\end{displaymath}\caption{General picture of the system}\label{Msystem}
\end{figure}
where $\nu_j$ denotes a set of infinitely many  particles where each element $\nu_j(k)\in\nu_j$  interacts with two more different from itself and no other particle outside  $\nu_j$, except  the ending point ($\nu_j(0)$), that is 
\begin{equation*}
\nu_j=\{\nu_j(k)\}_{k\geq 0}, \quad L(\nu_j(k),\nu_j(n))=0 \text{ if }|n-k|>1,\qquad 1\leq j\leq N
\end{equation*}
These sets are called \emph{channels} and we denote $\mathcal{C}=\cup_{j=1}^N\{\nu_j\}$ and 
\begin{equation*}\mathcal{A}_0=\cup_{\nu\in\mathcal{C}}\cup_{k\geq 1}\{\nu(k)\}
\end{equation*}
\begin{figure}[H]
\begin{displaymath}
\xymatrix{(\mathcal{A}_1\setminus{\nu(0)})\ar@{-}[r]&\nu(0)\ar@{-}[r]&\nu(1)\ar@{-}[r]&\dots\ar@{-}[r]&\nu(k)\ar@{-}[r]&\dots\ar@{-}[r]&}
\end{displaymath}\caption{Representation of an arbitrary channel $\nu\in\mathcal{C}$}
\end{figure}
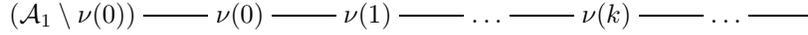
And  $\mathcal{A}_1$ in figure \ref{Msystem} is defined by $\mathcal{A}_1=\mathcal{A}\setminus\mathcal{A}_0$.\\ \\
Equation \eqref{SpectralL} is written now as
\begin{equation}\label{SpectralAL}
\lambda x(\alpha)=\sum_{\beta\in\mathcal{A}}L(\alpha,\beta)x(\beta),\qquad \alpha\in\mathcal{A}
\end{equation}
To simplify notation, for a channel $\nu\in\mathcal{C}$ and $k\geq 1$, we set
\begin{equation}\label{abnu}
-b_\nu(k-1)=L(\nu(k-1),\nu(k)),\quad a_\nu(k)=L(\nu(k),\nu(k)),\quad \nu(k)\in\nu
\end{equation}
In the sequel  we assume that the sequences $b_\nu,a_\nu$ are stabilized after some $K_0>0$ for all $\nu\in\mathcal{C},$ that is
\begin{equation*}
\begin{cases}& b_\nu(k)=1\quad \text{ if }k\geq K_0\\
&a_\nu(k)=2\quad \text{ if }k\geq K_0\end{cases}
\end{equation*}
Let $\alpha\in\mathcal{A}_1$, then \eqref{SpectralAL} can be expressed as
\begin{equation}\label{SpectralA1}
\lambda x(\alpha)-\sum_{\beta\in\mathcal{A}_1}L(\alpha,\beta)x(\beta)=\sum_{\beta\in\mathcal{A}_0}L(\alpha,\beta)x(\beta)=\sum_{\nu\in\mathcal{C}}L(\alpha,\nu(1))x(\nu(1))
\end{equation}
The last equality follows from the fact that the only pairs $(\alpha,\beta)\in\mathcal{A}_1\times\mathcal{A}_0$ such that $L(\alpha,\beta)\neq 0$ are of the form $(\nu(0),\nu(1))$, $\nu\in\mathcal{C}$.\\
Let $L_1=(L(\alpha,\beta))_{\alpha,\beta\in\mathcal{A}_1}$ be a submatrix of the operator $L$. It follows that $L_1$ is real, symmetric and positive. Let $0<\lambda_1\leq\dots\leq\lambda_M$ and $p_1,\dots,p_M\in l^2(\mathcal{A}_1)$ be its eigenvalues and the respective normalized eigenvectors, which can be chosen to be real-valued.  Here $M=\sharp\mathcal{A}_1$  and for $\lambda\notin\{\lambda_j\}_{j=1}^M$, the operator $L_1-\lambda I$ is invertible and \eqref{SpectralA1} turns into
\begin{equation*}
\begin{split}
x(\alpha)=&-\sum_{\beta\in\mathcal{A}_1}r(\alpha,\beta;\lambda)\sum_{\nu\in\mathcal{C}}L(\beta,\nu(1))x(\nu(1))=\\
&\sum_{\nu\in\mathcal{C}}r(\alpha,\nu(0);\lambda)b_\nu(0)x(\nu(1)),\quad \alpha\in\mathcal{A}_1
\end{split}
\end{equation*}
where $r(\alpha,\beta;\lambda)$ are the entries of the resolvent $\mathscr{R}=(L_1-\lambda I)^{-1}$ of the matrix $L_1$:
\begin{equation}\label{Rmatrix}
\mathscr{R}=(r(\alpha,\beta;\lambda))_{\alpha,\beta\in\mathcal{A}_1},\quad r(\alpha,\beta;\lambda)=\sum_{l=1}^M\frac{p_l(\alpha)p_l(\beta)}{\lambda_l-\lambda}
\end{equation}
Thus, for $\alpha=\nu(0)$ we obtain
\begin{equation}\label{bdrycondition}
x(\nu(0))=\sum_{\sigma\in\mathcal{C}}r(\nu(0),\sigma(0);\lambda)b_\sigma(0)x(\sigma(1)),\qquad \nu\in\mathcal{C}
\end{equation}
This relation links the values of a solution $x(\alpha)$  on $\mathcal{A}_0$ and on $\cup_{\nu\in\mathcal{C}}\{\nu(0)\}\subset\mathcal{A}_1$. We refer to it as the \emph{boundary condition}.\\ \\
Consider now the finite-difference equation, see for instance \cite{Tes} and \cite{MS}  in particular chapters  1 and 2 :
\begin{equation}\label{eqchannel}
-b(k-1)x(k-1)+a(k)x(k)-b(k)x(k+1)=\lambda x(k),\qquad k=1,2,\dots
\end{equation}
where the sequences $a, b$ are real valued and for  every $k>K_0>0$, $a(k)=2$ and $b(k)=1$. Set $\lambda=\lambda(\theta)$ as follows
\begin{equation}\label{lambdapara}
\begin{split}
\lambda:\quad&\mathbb{T}\longrightarrow[0,4]\\
&\theta\longrightarrow\lambda(\theta):=2-\theta-\theta^{-1}
\end{split}
\end{equation}
Then there exists linear independent solutions of \eqref{eqchannel}: $e(k,\theta),e(k,\theta^{-1})$, such that $\overline{e(k,\theta)}=e(k,\theta^{-1})$ and
\begin{equation}\label{Jostsolution}
e(k,\theta)=\begin{cases}\theta^k\quad&\text{ if }k>K_0\\
\sum_{n\geq k}^{K_0+1}c(n,k)\theta^n\quad&\text{ otherwise }\end{cases}
\end{equation}
here $\theta\in\mathbb{T}\setminus\{\pm1\}$ and $c(n,k)$ are constants. Thus every solution $\xi(k,\theta)$ of \eqref{eqchannel} can be expressed as
\begin{equation}\label{mntheta}
\xi(k,\theta)=m(\theta)e(k,\theta)+n(\theta)e(k,\theta^{-1}),\qquad k=0,1,2,\dots
\end{equation}
Notice that $m(\theta), n(\theta)$ are constants that depends on $\theta$ only. \\ \\
Consider the sequences $a_\nu,b_\nu$ defined in \eqref{abnu} and the corresponding finite-difference equations \eqref{eqchannel} for each channel $\nu\in\mathcal{C}$. Set the matrices
\begin{equation*}
\begin{split}
&E(k,\theta):=\text{diag}\{e_\nu(k,\theta)\}_{\nu\in\mathcal{C}},\qquad B(k)=\text{diag}\{b_\nu(k)\}_{\nu\in\mathcal{C}}\\
&R(\theta)=(r(\nu(0),\sigma(0);\lambda(\theta))_{\nu,\sigma\in\mathcal{C}}:\text{ }l^2(\mathcal{C})\to l^2(\mathcal{C})\\
&R_1(\theta)=(r(\alpha,\nu(0);\lambda(\theta))_{\alpha\in\mathcal{A}_1,\nu\in\mathcal{C}}:\text{ }l^2(\mathcal{C})\to l^2(\mathcal{A}_1)
\end{split}
\end{equation*}
where the functions $r(\alpha,\beta;\lambda)$ are defined in \eqref{Rmatrix} and $e_\nu(k,\theta)$ are the corresponding solutions of \eqref{eqchannel} for $\nu\in\mathcal{C}$ defined in \eqref{Jostsolution}. Let $\textbf{m}=\{m_\nu(\theta)\}_{\nu\in\mathcal{C}}$ and $\textbf{n}=\{n_\nu(\theta)\}_{\nu\in\mathcal{C}}$, here $m_\nu(\theta),n_\nu(\theta)$ are the constants in \eqref{mntheta} corresponding to the channel $\nu$. Then the boundary condition \eqref{bdrycondition} acquires the form
\begin{equation*}
E(0,\theta)\textbf{m}+E(0,\theta^{-1})\textbf{n}=R(\theta)B(0)(E(1,\theta)\textbf{m}+E(1,\theta^{-1})\textbf{n})
\end{equation*}
or 
\begin{equation}\label{TmT1n}
T(\theta)\textbf{m}=-T(\theta^{-1})\textbf{n}
\end{equation}
where
\begin{equation}\label{Tmatrix}
T(\theta)=E(0,\theta)-R(\theta)B(0)E(1,\theta)
\end{equation}
The matrices  $R(\theta)$ and $T(\theta)$ are well defined for all $\theta\in\bar{\mathbb{D}}\setminus\mathscr{O}$, where
\begin{equation}\label{Odef}
\mathscr{O}=\{\theta\in\bar{\mathbb{D}}:\text{ }\lambda_l-\lambda(\theta)=0\text{ for some }l\in\{1,\dots,M\}\}\cup\{-1,0,1\}
\end{equation}
\begin{Lemma}
The inequality
\begin{equation*}
|\langle\overline{E(1,\theta)}B(0)T(\theta)x,x\rangle|\geq|\text{Im}\langle\overline{E(1,\theta)}B(0)T(\theta)x,x\rangle|\geq\frac{|\bar{\theta}-\theta|}{2}\Delta||x||^2
\end{equation*}
holds for all $\theta\in\bar{\mathbb{D}}\setminus\mathscr{O}$ and $x\in l^2(\mathcal{C})$.
\end{Lemma}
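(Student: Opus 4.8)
The plan is to prove the chain of inequalities from right to left, exploiting the structure of $T(\theta)$ in \eqref{Tmatrix} and the explicit form of the resolvent $R(\theta)$ in \eqref{Rmatrix}. First I would observe that $\overline{E(1,\theta)}B(0)E(0,\theta)$ is a diagonal matrix with entries $\overline{e_\nu(1,\theta)}\,b_\nu(0)\,e_\nu(0,\theta)$; the key point is that for the Jost-type solution $e_\nu(k,\theta)$ one has, from the Wronskian relation for \eqref{eqchannel}, that the Wronskian $W(\theta)=b_\nu(k)\big(e_\nu(k,\theta)e_\nu(k+1,\theta^{-1})-e_\nu(k+1,\theta)e_\nu(k,\theta^{-1})\big)$ is $k$-independent and equals (using the normalization $\theta^k$ for $k>K_0$) the value $\theta^{-1}-\theta$. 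Hence $b_\nu(0)\big(e_\nu(0,\theta)\overline{e_\nu(1,\theta)}-e_\nu(1,\theta)\overline{e_\nu(0,\theta)}\big)=\theta^{-1}-\theta=\bar\theta-\theta$ for $\theta\in\mathbb{T}$, which says precisely that $\operatorname{Im}$ of the diagonal entry $\overline{e_\nu(1,\theta)}b_\nu(0)e_\nu(0,\theta)$ equals $\tfrac{1}{2\I}(\bar\theta-\theta)=\operatorname{Im}\theta$... more precisely $\overline{e_\nu(1,\theta)}b_\nu(0)e_\nu(0,\theta)-\overline{b_\nu(0)e_\nu(0,\theta)}\,e_\nu(1,\theta)=\bar\theta-\theta$, so the anti-Hermitian part of the diagonal block $\overline{E(1,\theta)}B(0)E(0,\theta)$ is $(\bar\theta-\theta)I$.

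Second, I would handle the term coming from $-R(\theta)B(0)E(1,\theta)$, i.e. the contribution $-\overline{E(1,\theta)}B(0)R(\theta)B(0)E(1,\theta)$ to the quadratic form. Writing $y=B(0)E(1,\theta)x\in l^2(\mathcal{C})$, this is $-\langle R(\theta)y,y\rangle$, and from \eqref{Rmatrix} we have $R(\theta)=\sum_{l=1}^M \frac{P_l}{\lambda_l-\lambda(\theta)}$ where $P_l$ is the (real, rank-one, positive semidefinite) matrix $P_l=(p_l(\nu(0))p_l(\sigma(0)))_{\nu,\sigma}$. Therefore $\operatorname{Im}\langle R(\theta)y,y\rangle = \sum_{l=1}^M |\langle p_l(\cdot(0)),y\rangle|^2\,\operatorname{Im}\frac{1}{\lambda_l-\lambda(\theta)}$, and since $\lambda(\theta)=2-\theta-\theta^{-1}$ with $\theta\in\bar{\mathbb D}$, a direct computation gives $\operatorname{Im}\lambda(\theta)=-\operatorname{Im}(\theta+\theta^{-1})$, which on $\mathbb{T}$ is $0$ and inside $\mathbb{D}$ has a definite sign matching $\operatorname{Im}\theta$; one checks $\operatorname{Im}\frac{1}{\lambda_l-\lambda(\theta)}$ has the same sign as $\operatorname{Im}\lambda(\theta)$, hence the same sign as $\tfrac{1}{2\I}(\bar\theta-\theta)$. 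So this term contributes to $\operatorname{Im}\langle\overline{E(1,\theta)}B(0)T(\theta)x,x\rangle$ a quantity with the same sign as the main diagonal term; adding it only helps. Combining, $\operatorname{Im}\langle\overline{E(1,\theta)}B(0)T(\theta)x,x\rangle$ has absolute value at least $\tfrac{|\bar\theta-\theta|}{2}\|x\|^2$, provided $\Delta$ is interpreted as the relevant lower bound constant (I would track what $\Delta$ denotes — presumably $\Delta=1$ or $\Delta = \min_\nu b_\nu(0)^{?}$ or a coercivity constant from the paper's conventions, and fix the normalization so the factor $\tfrac12$ comes out right). The first inequality $|z|\ge|\operatorname{Im} z|$ is trivial.

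The main obstacle I anticipate is twofold: (a) getting the \emph{signs} consistent, namely showing that the diagonal ``free'' contribution and the resolvent contribution do not cancel but reinforce — this requires the correct orientation of the parametrization $\lambda(\theta)$ and care with whether we are on $\mathbb{T}$ or strictly inside $\bar{\mathbb D}$, and on $\mathbb{T}$ itself the inequality degenerates to $0\ge 0$ unless $\Delta$-coercivity of something else is invoked; and (b) pinning down the precise meaning of $\Delta$ in the statement (it is not defined in the excerpt) and of the normalization of $e_\nu$ at $k=1$, since $e_\nu(1,\theta)$ is not simply $\theta$ unless $K_0=0$. I would resolve (b) by using the Wronskian identity, which is normalization-robust, and resolve (a) by splitting into the boundary case $\theta\in\mathbb{T}\setminus\{\pm1\}$ (where the estimate is exactly the Wronskian computation, $|\bar\theta-\theta|=2|\sin\varphi|>0$) and the interior case (where an extra strictly positive term from $\operatorname{Im}\frac{1}{\lambda_l-\lambda(\theta)}$ appears). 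Everything else is routine linear algebra with the rank-one spectral decomposition \eqref{Rmatrix}.
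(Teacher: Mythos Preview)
The paper does not actually prove this lemma: it appears in Section~2.2, which is explicitly a summary of results quoted from \cite{LM} (``The detailed description of this problem is given in \cite{LM}. In this subsection we repeat it\dots''), and no proof is supplied. So there is nothing in the present paper to compare your argument against; the reference is the Lyubarskii--Marchenko paper.

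On the substance of your sketch: the decomposition
\[
\langle\overline{E(1,\theta)}B(0)T(\theta)x,x\rangle=\langle\overline{E(1,\theta)}B(0)E(0,\theta)x,x\rangle-\langle R(\theta)y,y\rangle,\qquad y=B(0)E(1,\theta)x,
\]
is correct, and on the unit circle your Wronskian computation is clean: since the Jost coefficients $c(n,k)$ are real one has $\overline{e_\nu(k,\theta)}=e_\nu(k,\bar\theta)=e_\nu(k,\theta^{-1})$, the Wronskian equals $\bar\theta-\theta$, the resolvent term is real because $\lambda(\theta)\in\mathbb{R}$, and the inequality becomes an equality with $\Delta=1$. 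Your observation that $\Delta$ is nowhere defined in this paper is accurate; you must go to \cite{LM} for its meaning.

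The genuine gap is your treatment of the interior of $\bar{\mathbb{D}}$. There the two pieces do \emph{not} simply reinforce. A discrete Green's identity gives, for each channel,
\[
b_\nu(0)\bigl(e_\nu(0,\theta)\overline{e_\nu(1,\theta)}-\overline{e_\nu(0,\theta)}e_\nu(1,\theta)\bigr)
=|\theta|^{2K}(\bar\theta-\theta)+(\lambda-\bar\lambda)\sum_{k=1}^{K}|e_\nu(k,\theta)|^2,
\]
so the imaginary part of the diagonal block contributes a term proportional to $+\operatorname{Im}\lambda(\theta)$, while $\operatorname{Im}\langle R(\theta)y,y\rangle=\sum_l|c_l|^2\operatorname{Im}(\lambda_l-\lambda)^{-1}$ is also proportional to $+\operatorname{Im}\lambda(\theta)$; after the minus sign in $T(\theta)$ these two contributions have \emph{opposite} signs, contrary to your heuristic. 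The inequality therefore cannot be obtained by adding two same-sign terms; one needs a more global argument (in \cite{LM} this comes from interpreting the whole expression via the eigenfunction $\varphi(\alpha,\theta)$ on $\mathcal{A}$ and using positivity/self-adjointness of $L$, so that the net quantity is a boundary flux of a nonnegative object). Your plan as written would stall at exactly the sign step you flagged in obstacle~(a).
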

As a consequence we have
\begin{Cor}
The operators $T(\theta)$ are invertible for all nonreal $\theta\in\bar{\mathbb{D}}\setminus\mathscr{O}$
\end{Cor}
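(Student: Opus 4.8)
The plan is to obtain injectivity of $T(\theta)$ as an immediate consequence of the preceding Lemma, and then to upgrade injectivity to invertibility by recalling that $\mathcal{C}=\{\nu_1,\dots,\nu_N\}$ is a finite set, so that $l^2(\mathcal{C})$ is finite-dimensional and $T(\theta)$ is a linear operator on $\mathbb{C}^N$.

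First I would fix a nonreal $\theta\in\bar{\mathbb{D}}\setminus\mathscr{O}$ and suppose $x\in l^2(\mathcal{C})$ satisfies $T(\theta)x=0$. Then $\overline{E(1,\theta)}B(0)T(\theta)x=0$, so the scalar $\langle\overline{E(1,\theta)}B(0)T(\theta)x,x\rangle$ vanishes, and the chain of inequalities in the Lemma collapses to
\begin{equation*}
0=\bigl|\langle\overline{E(1,\theta)}B(0)T(\theta)x,x\rangle\bigr|\geq\frac{|\bar{\theta}-\theta|}{2}\Delta\,\|x\|^2 .
\end{equation*}
Since $\theta$ is not real we have $|\bar{\theta}-\theta|=2|\mathrm{Im}\,\theta|>0$, and $\Delta>0$ by the way it is defined; hence the displayed inequality forces $\|x\|^2\leq 0$, i.e. $x=0$. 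Therefore $\ker T(\theta)=\{0\}$ and $T(\theta)$ is injective.

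To conclude, I would use that $T(\theta):l^2(\mathcal{C})\to l^2(\mathcal{C})$ is a linear map between finite-dimensional spaces of equal dimension $N=\sharp\mathcal{C}$; an injective such map is automatically surjective, hence bijective, and its inverse is again linear (and bounded, the space being finite-dimensional). This proves that $T(\theta)$ is invertible for every nonreal $\theta\in\bar{\mathbb{D}}\setminus\mathscr{O}$. There is essentially no obstacle here: the argument is purely formal once the Lemma is in hand, and the only points deserving a word of justification are the strict positivity of the constant $\Delta$ and the elementary fact that $|\bar{\theta}-\theta|>0$ precisely when $\theta\notin\mathbb{R}$ — both immediate. (Had $l^2(\mathcal{C})$ been infinite-dimensional one would additionally need to bound $\|\overline{E(1,\theta)}B(0)\|$ from above to turn the Lemma into a genuine lower bound $\|T(\theta)x\|\gtrsim\|x\|$, and then control the range of $T(\theta)$ via a parallel estimate for its adjoint; but this is unnecessary in the present finite setting.)
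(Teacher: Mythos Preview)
Your argument is correct and is precisely the one the paper has in mind: the corollary is stated without proof as an immediate consequence of the preceding Lemma, and the only steps to fill in are exactly the ones you supply --- injectivity from the lower bound together with $|\bar\theta-\theta|>0$ for nonreal $\theta$, and then invertibility from finite-dimensionality of $l^2(\mathcal{C})$. There is nothing to add.
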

Thus, for $\theta\in\bar{\mathbb{D}}\setminus\mathscr{O}$, equation \eqref{TmT1n} implies that
\begin{equation*}
\textbf{m}=S(\theta)\textbf{n}
\end{equation*}
where
\begin{equation*}
S(\theta)=-T(\theta)^{-1}T(\theta^{-1})=(s(\sigma,\nu;\theta))_{\sigma,\nu\in\mathcal{C}}
\end{equation*}
Thus, all solutions $\varphi(\alpha,\theta)$ of \eqref{SpectralL} are of the form
\begin{equation}\label{varphialpha}
\varphi(\alpha,\theta)=\begin{cases}\Big(U(k,\theta)\textbf{n}\Big)(\nu(k))&\text{ if }\alpha=\nu(k),\text{ for some }\nu\in\mathcal{C},\text{ } k\geq 0\\
\Big(R_1(\theta)B(0)U(1,\theta)\textbf{n}\Big)(\alpha)&\text{ if }\alpha\in\mathcal{A}_1\end{cases}
\end{equation}
with an arbitrary $\textbf{n}\in \l^2(\mathcal{C})$ and
\begin{equation*}
U(k,\theta)=E(k,\theta^{-1})+E(k,\theta)S(\theta),\qquad\theta\in\mathbb{T}\setminus\mathscr{O}, \quad k=0,1,\dots
\end{equation*}
Actually the function $U(k,\theta)$ can be extended to a meromorphic function inside the unit disk  and 
\begin{Lemma}\label{lemmaUpoles}
There is a finite set $\Theta\subset\mathbb{D}$ such that for all $k\geq0$ the poles of $U(k,\theta)$ are located in $\Theta\cup\{0\}$. In addition the order of the pole in the origin is $k$ and the rest of poles are simple. The matrix functions $U(k,\theta)$ are bounded in sufficiently small annulus $1-\epsilon\leq |\theta|\leq 1$.
\end{Lemma}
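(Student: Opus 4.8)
The plan is to trace the $\theta$-dependence back through the chain of definitions $U(k,\theta) = E(k,\theta^{-1}) + E(k,\theta)S(\theta)$, $S(\theta) = -T(\theta)^{-1}T(\theta^{-1})$, and $T(\theta) = E(0,\theta) - R(\theta)B(0)E(1,\theta)$, and to identify at each stage which singularities are spurious (removable or cancelled) and which are genuine. First I would record the elementary facts: for each channel $\nu$, the Jost solution $e_\nu(k,\theta)$ is, by \eqref{Jostsolution}, a \emph{polynomial} in $\theta$ of degree at most $\max\{k,K_0+1\}$ (for $k>K_0$ it is literally $\theta^k$), hence $E(k,\theta)$ is entire in $\theta$; consequently $E(k,\theta^{-1})$ has a pole only at $\theta=0$, of order exactly $k$ when $k>K_0$ and of order $\le K_0+1$ otherwise, but in fact exactly $k$ for all $k$ since the leading term $c(k,k)\theta^{-k}$ has $c(k,k)\neq 0$ (the recursion for $c(n,k)$ forces this because $b_\nu(k-1)\neq 0$). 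The matrix $R(\theta)$ by \eqref{Rmatrix}–\eqref{Odef} is meromorphic on $\bar{\mathbb D}$ with at most simple poles at the finitely many points where $\lambda(\theta)=\lambda_l$, plus possible issues at $\{-1,0,1\}$; in particular $R(\theta)$ has no pole at $\theta=0$ since $\lambda(0)$ is finite and not an eigenvalue-related singularity of the resolvent expansion. Therefore $T(\theta)$ is meromorphic on $\bar{\mathbb D}$ with at most simple poles confined to $\mathscr O\setminus\{0\}$ and is in fact analytic near $\theta=0$.

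Next I would handle $S(\theta)=-T(\theta)^{-1}T(\theta^{-1})$. Clearing the poles of $T$: multiply by the (finitely many) linear factors $(\theta-\theta_l)$ corresponding to the points of $\mathscr O\setminus\{0,\pm1\}$ to get $\tilde T(\theta)$ analytic on $\bar{\mathbb D}$; then $T(\theta)^{-1} = (\det T(\theta))^{-1}\operatorname{adj}T(\theta)$, so the poles of $T(\theta)^{-1}$ inside $\mathbb D$ lie among the zeros of $\det\tilde T(\theta)$ together with the $\theta_l$, a finite set — call the zero set $\Theta$. By the Corollary preceding the lemma, $T(\theta)$ is invertible for all nonreal $\theta\in\bar{\mathbb D}\setminus\mathscr O$, so $\Theta\cap(\bar{\mathbb D}\setminus\mathbb R)=\emptyset$, i.e. $\Theta\subset\mathbb D$ consists only of real points; either way it is finite and contained in $\mathbb D$. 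Each such zero contributes a pole whose order is bounded by the multiplicity of the zero of $\det\tilde T$, which is uniformly bounded (independent of $k$, since $T$ does not involve $k$). The factor $T(\theta^{-1})$ in $S(\theta)$ contributes, after the same clearing, a pole only at $\theta=0$; but I must check this does not spoil the count — and here I would invoke the structure $T(\theta^{-1})=E(0,\theta^{-1})-R(\theta^{-1})B(0)E(1,\theta^{-1})$, whose worst term $R(\theta^{-1})B(0)E(1,\theta^{-1})$ is $O(\theta^{-1})$ near the origin (since $E(1,\theta^{-1})=O(\theta^{-1})$ and $R(\theta^{-1})$ is bounded near $\theta=0$ when $|\theta^{-1}|$ is large — wait, this needs care: $R$ is defined on $\bar{\mathbb D}$, and $\theta^{-1}\notin\bar{\mathbb D}$, so actually one uses the \emph{unitarity/symmetry} relation $S(\theta)S(\theta^{-1})=I$ or the explicit meromorphic continuation of $S$ from \cite{LM}). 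I expect this point — controlling the behavior of $S(\theta)$, hence $U(k,\theta)$, at $\theta=0$ and confirming the pole there has order exactly $k$ — to be the main obstacle, and the resolution is that in $U(k,\theta)=E(k,\theta^{-1})+E(k,\theta)S(\theta)$ the first summand already has a pole of order $k$ at $0$, while $E(k,\theta)$ is a polynomial of degree $\le\max\{k,K_0+1\}$ and $S(\theta)$ has at worst a \emph{bounded-order} pole at $0$; one then shows the product $E(k,\theta)S(\theta)$ cannot cancel the $\theta^{-k}$ term of $E(k,\theta^{-1})$ — e.g. because $S(\theta)=I+O(\theta)$ as $\theta\to0$ (a standard normalization of the scattering matrix, from $e_\nu(k,0)$ behavior), so $E(k,\theta)S(\theta)$ is in fact \emph{analytic} at $0$, giving pole order exactly $k$.

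Finally, the simplicity of the poles in $\Theta$: these come from the simple poles of $R(\theta)$ (the resolvent $\sum_l p_l(\alpha)p_l(\beta)/(\lambda_l-\lambda(\theta))$ has simple poles in $\lambda$, and $\lambda(\theta)$ has nonzero derivative at the relevant $\theta\in\mathbb D\setminus\{\pm1\}$) and from simple zeros of $\det T$ — the latter being where I would either cite \cite{LM} directly or argue via the Lemma preceding the Corollary, whose quantitative lower bound $|\langle \overline{E(1,\theta)}B(0)T(\theta)x,x\rangle|\ge \tfrac{|\bar\theta-\theta|}{2}\Delta\|x\|^2$ forces the zeros of $\det T$ to be simple and real (the bound degenerates only linearly as $\theta$ approaches the real axis). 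For the boundedness statement in the annulus $1-\epsilon\le|\theta|\le1$: choose $\epsilon>0$ small enough that the closed annulus avoids the finite set $\Theta\cup\{0\}$ and the points $\pm1$ (here one uses that $\pm1$ are not poles — near $\theta=\pm1$ the potential singularity of $R$ and of $S$ is removable because $\lambda(\pm1)=4,0$ are endpoints of the spectrum and the resolvent expansion \eqref{Rmatrix} stays finite there, as $\lambda_l\neq 0,4$ by positivity and the stabilization $a_\nu=2$, plus the square-root-type branch points of scattering are tamed by the parametrization $\theta\mapsto\lambda(\theta)$); on such an annulus $E(k,\theta)$ and $E(k,\theta^{-1})$ are bounded uniformly and $S(\theta)$ is bounded (being continuous on the compact annulus, which meets no pole), whence $U(k,\theta)$ is bounded there, uniformly in $k\ge0$ once one notes $\|E(k,\theta)\|\le C$ and $\|E(k,\theta^{-1})\|\le C$ for $|\theta|$ in that range since each entry is $\theta^k$ or $\theta^{-k}$ with $|\theta|\in[1-\epsilon,1]$ — bounded only if we also stay $|\theta|\le 1$ for $\theta^{-k}$, i.e. $|\theta^{-1}|\ge1$, so $\theta^{-k}$ could blow up; the fix is that $U$ is built so that the growing parts cancel, which is exactly the content of the meromorphic-continuation construction in \cite{LM} and gives the stated uniform bound. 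I would therefore present the boundedness as a consequence of that construction together with compactness, and spend the bulk of the written proof on the pole-order bookkeeping above.
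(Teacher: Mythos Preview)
The paper does not give its own proof of this lemma: immediately after the statement it writes ``The exact statement of the lemma~\ref{lemmaUpoles} is shown in \cite{LM} in section~4, in Lemma~4.1, Lemma~4.2 and their Corollary.'' So there is nothing to compare your argument against --- the paper simply imports the result from Lyubarskii--Marchenko.

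As for your sketch taken on its own merits: the bookkeeping on $E(k,\theta)$, $E(k,\theta^{-1})$, and the resolvent poles of $R(\theta)$ is fine, and the finiteness of $\Theta$ follows as you say from the fact that $\det\tilde T$ is a polynomial. But you yourself flag the two points where the argument is not self-contained. First, when you reach $T(\theta^{-1})$ with $\theta\to 0$ you need $R$ evaluated outside $\bar{\mathbb D}$, and you fall back on an unproved normalization ``$S(\theta)=I+O(\theta)$''; this is not automatic from the definitions given here and is precisely one of the things established in \cite{LM}. Second, for the boundedness in the annulus you correctly observe that $|\theta^{-k}|$ is unbounded for $|\theta|<1$ and that some cancellation in $U(k,\theta)=E(k,\theta^{-1})+E(k,\theta)S(\theta)$ must occur --- and then you defer that cancellation to \cite{LM} as well. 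Likewise, the simplicity of the zeros of $\det T$ does not follow from the inequality you quote (a linear lower bound in $|\bar\theta-\theta|$ rules out nonreal zeros but does not by itself bound multiplicities of real zeros). So your proposal is really an outline of why the statement is plausible, with the substantive analytic steps outsourced to the same reference the paper cites; that is acceptable for the present paper's purposes, but if you intend it as an independent proof those three points need genuine arguments.
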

The exact statement of the lemma \ref{lemmaUpoles} is shown in \cite{LM} in  section 4,  in Lemma 4.1, Lemma 4.2 and their Corollary.
%%%%%%%%%%%%%%%%%%%%%%%
%%%%%%%%%%%%%%%%%%%%%%%%
%%%%%%%%%%%%%%%%%%%%%%
%%%%%%%%%%%%%%%%%%%%%
\section{Discrete Schr\"odinger evolution}\label{sec3}
Before we establish our main result of this section (theorem \ref{thdecayuniq}), we need the following technical lemma
\begin{Lemma}\label{Lemmarational}
The entries of the matrices $U(k,\theta)$, $k\geq 0$, and $R_1(\theta)B(0)U(1,\theta)$ in\eqref{varphialpha} are rational functions with respect to $\theta$
\end{Lemma}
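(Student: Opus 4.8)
The plan is to trace through the explicit formulas defining $U(k,\theta)$ and $R_1(\theta)B(0)U(1,\theta)$ and check that every building block is a rational function of $\theta$, then use that rational functions form a field (closed under sums, products, and inverses of nonzero elements) to conclude. First I would recall that by the parametrization \eqref{lambdapara} we have $\lambda(\theta)=2-\theta-\theta^{-1}$, which is rational in $\theta$; hence the resolvent entries $r(\alpha,\beta;\lambda(\theta))=\sum_{l=1}^M p_l(\alpha)p_l(\beta)/(\lambda_l-\lambda(\theta))$ from \eqref{Rmatrix} are rational in $\theta$, being finite sums of reciprocals of the rational functions $\lambda_l-\lambda(\theta)$. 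Therefore all entries of the matrices $R(\theta)$ and $R_1(\theta)$, and the constant diagonal matrix $B(0)$, have rational entries.

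Next I would handle the Jost solutions $e_\nu(k,\theta)$ from \eqref{Jostsolution}: for $k>K_0$ they equal $\theta^k$, and for $k\le K_0$ they are finite sums $\sum_{n=k}^{K_0+1} c(n,k)\theta^n$ — in either case a \emph{polynomial} (Laurent polynomial, if one allows it, but in fact a genuine polynomial here) in $\theta$, hence rational. Consequently the diagonal matrices $E(k,\theta)$ and $E(k,\theta^{-1})$ have rational entries for every fixed $k\ge 0$. Then $T(\theta)=E(0,\theta)-R(\theta)B(0)E(1,\theta)$ from \eqref{Tmatrix} has rational entries, and likewise $T(\theta^{-1})$. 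Since $T(\theta)$ is an invertible matrix over the field of rational functions (its determinant is a nonzero rational function, using the Corollary that $T(\theta)$ is invertible for nonreal $\theta\in\bar{\mathbb D}\setminus\mathscr O$, so the determinant is not identically zero), Cramer's rule shows $T(\theta)^{-1}$ has rational entries. Hence $S(\theta)=-T(\theta)^{-1}T(\theta^{-1})$ has rational entries, and so does $U(k,\theta)=E(k,\theta^{-1})+E(k,\theta)S(\theta)$ for each $k\ge 0$, as a sum and product of matrices with rational entries. Finally $R_1(\theta)B(0)U(1,\theta)$ is a product of three matrices with rational entries, hence has rational entries; this establishes the claim for both families of matrices appearing in \eqref{varphialpha}.

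The only point requiring care — and the nearest thing to an obstacle — is justifying that $\det T(\theta)$ is not the zero element of the field of rational functions, so that the inverse exists over that field and Cramer's rule applies; this follows because the Corollary guarantees $T(\theta)$ is invertible (hence $\det T(\theta)\ne 0$ as a number) at every nonreal $\theta$ in a set with accumulation points, and a rational function vanishing on such a set is identically zero. A secondary bookkeeping point is that $e_\nu(k,\theta)$ is defined piecewise, but since each piece is a polynomial and the two pieces must agree where both are considered, no subtlety arises; one just notes polynomials are rational. I do not anticipate any genuine difficulty beyond this.
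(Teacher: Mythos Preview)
Your proof is correct and follows essentially the same approach as the paper's own proof: both trace through the explicit formulas \eqref{Rmatrix}, \eqref{lambdapara}, \eqref{Jostsolution}, and \eqref{Tmatrix} to verify that each constituent is rational in $\theta$ and then conclude by closure under field operations. Your version is simply more detailed, in particular making explicit the point (left implicit in the paper) that $\det T(\theta)$ is a nonzero rational function so that Cramer's rule gives rational entries for $T(\theta)^{-1}$ and hence for $S(\theta)$.
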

\begin{proof}
It follows from \eqref{Rmatrix} that the entries of $r(\alpha,\beta;\lambda)$ of the resolvent $\mathscr{R}$ are rational functions with respect to $\lambda$. Using \eqref{lambdapara}, $\lambda(\theta)$ is a rational function with respect to $\theta$, whence $r(\alpha,\beta;\lambda)$ are rational functions with respect to $\theta$ too. Since the entries of $B(0)$ are constants, it remains to show that $U(k,\theta)$, $k\geq 0$, are also rational, but this follows from \eqref{Jostsolution} and \eqref{Tmatrix}. 
\end{proof}
\begin{Th}\label{thdecayuniq}
Let $u(t,\alpha)\in C^1([0,1],\l^2(\mathcal{A}))$ be a solution of
\begin{equation}\label{evouproblem}
\partial_t u(t,\alpha)=\I\sum_{\beta\in\mathcal{A}}L(\alpha,\beta)u(t,\beta),\qquad t\in[0,1],\quad\alpha\in\mathcal{A}
\end{equation}
where $\mathcal{A}$ and $L$ are as in section 2.2. Let $\nu_0\in\mathcal{C}$, if for some $\epsilon>0$,
\begin{equation}\label{udecay}
|u(t,\nu_0(k))|\leq C\Big(\frac{\E}{(2+\epsilon)k}\Big)^k,\qquad k>0,\quad t\in\{0,1\}
\end{equation}
then $u(t,\nu_0(k))=0$ for all $t\in[0,1]$ and $k\geq 0$.
\end{Th}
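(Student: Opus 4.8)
The plan is to expand $u(t,\alpha)$ in the generalized eigenfunctions $\varphi(\alpha,\theta)$ from \eqref{varphialpha}, reduce the hypothesis \eqref{udecay} to growth estimates on an associated entire (or meromorphic) function in the variable $\theta$, and then use the trigonometric convexity of the indicator function to force that function to vanish on one channel. Concretely, the spectral theorem applied to the selfadjoint operator $L$ lets one write $u(t,\alpha)=\int_{\mathbb T} \E^{\I t\lambda(\theta)}\,\varphi(\alpha,\theta)\,g(\theta)\,d\theta$ for a suitable vector-valued density $\mathbf g=\mathbf g(\theta)\in\l^2(\mathcal C)$ (the "$\mathbf n$" in \eqref{varphialpha}); here I would use that $\lambda$ maps $\mathbb T$ onto $[0,4]$ and that the continuous spectrum of $L$ lives there, absorbing the finitely many eigenvalues coming from $\mathscr O$ and from $\Theta$ by the remarks on functions with finitely many poles in Section \ref{Preli}. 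Restricting to $\alpha=\nu_0(k)$ and using $e_{\nu_0}(k,\theta)=\theta^k$ for $k>K_0$, the coefficient $u(t,\nu_0(k))$ becomes, up to the meromorphic prefactors in $U(k,\theta)$, the $k$-th Fourier/Laurent coefficient of an $L^1$-type function on $\mathbb T$ depending on $t$.

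Next I would convert the pointwise decay \eqref{udecay} into an exponential-type statement. By Theorem \ref{Thtypeformula}, a power series $\sum c_n z^n$ with $|c_n|\le C(\E/((2+\epsilon)n))^n$ defines an entire function of exponential type $\le \tfrac{1}{2+\epsilon}$. Applying this at $t=0$ and $t=1$, and using Lemma \ref{Lemmarational} together with Lemma \ref{lemmaUpoles} to clear the finitely many poles (multiplying by a fixed polynomial $P(\theta)$ that kills the poles in $\Theta\cup\{0\}$, which changes neither $\sigma$ nor $h$ by the Remark), I get that the function
\begin{equation*}
F_t(\theta):=\sum_{k\ge 0} u(t,\nu_0(k))\,\theta^{-k}
\end{equation*}
extends, after such a correction, to a function analytic outside a disk $\overline{\mathbb D_r}$, of exponential type at most $\tfrac{1}{2+\epsilon}$ in the variable $1/\theta$, for $t\in\{0,1\}$. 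The point of having it at \emph{two} times is that the free evolution acts on the spectral side by multiplication by $\E^{\I t\lambda(\theta)}=\E^{\I t(2-\theta-\theta^{-1})}$; so $F_1(\theta)$ is essentially $F_0(\theta)$ multiplied by $\E^{-\I t(\theta+\theta^{-1})}$ evaluated along the relevant contour, and one extracts from the two decay hypotheses a bound on the indicator of the corresponding entire function in two opposite (or complementary) directions.

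The heart of the argument is then the indicator-function estimate. The multiplier $\E^{\I(2-\theta-\theta^{-1})}$ has, as a function of $w=1/\theta$ near the relevant part of the circle, indicator $h(\varphi)$ equal to $\pm\sin\varphi$-type expressions with total "width" $1$ in opposite directions; combined with the type $\le \tfrac{1}{2+\epsilon}<\tfrac12$ coming from each of $F_0,F_1$, the inequality $h_f(\varphi)+h_f(\varphi+\pi)\ge 0$ from \eqref{hineq} (valid in the punctured-disk setting by the Remark) becomes a strict contradiction \emph{unless} the underlying function is identically zero. This is the step modeled on Theorem 2.3 of \cite{JLMP}: the gain $\epsilon$ over the critical exponent $2$ is exactly what makes $\tfrac{1}{2+\epsilon}+\tfrac{1}{2+\epsilon}<\tfrac{2}{2}$ fail the trigonometric-convexity constraint, forcing $F_0\equiv F_1\equiv 0$, hence $u(t,\nu_0(k))=0$ for $k$ large, and then for all $k\ge 0$ by running \eqref{eqchannel}/\eqref{bdrycondition} backwards down the channel. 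Finally, the evolution \eqref{evouproblem} restricted to data vanishing on all of $\nu_0$ at, say, $t=0$, together with uniqueness of the ODE in $t$, propagates the vanishing on that thread to all $t\in[0,1]$.

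The main obstacle I anticipate is making the spectral representation of $u(t,\cdot)$ rigorous and identifying the coefficient $u(t,\nu_0(k))$ cleanly with a single scalar transform to which Theorem \ref{Thtypeformula} applies: the density $\mathbf g(\theta)$ is only $L^2$ a priori, the functions $U(k,\theta)$ and $R_1(\theta)B(0)U(1,\theta)$ carry $k$-dependent poles at the origin (order exactly $k$, by Lemma \ref{lemmaUpoles}), and one must check that after clearing poles the resulting $\theta$-series genuinely has the claimed radius of convergence and exponential type, uniformly in $t\in\{0,1\}$. Handling the behavior near $\theta=\pm1$ (the edges of the spectrum, where $\lambda'(\theta)$ degenerates) and near the finitely many points of $\mathscr O$ will require the boundedness statement of Lemma \ref{lemmaUpoles} in the annulus $1-\epsilon\le|\theta|\le1$ and a careful but routine contour deformation; once that bookkeeping is in place, the entire-function part of the argument is the same Phragmén–Lindelöf / trigonometric-convexity mechanism as in the one-dimensional case.
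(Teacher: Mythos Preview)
Your overall strategy---convert \eqref{udecay} via Theorem~\ref{Thtypeformula} into an exponential-type bound, combine the two times through the multiplier $\E^{\I\lambda(\theta)}$, and obtain a contradiction from \eqref{hineq}---is exactly the mechanism the paper uses. But the route you take to get there has a real gap, and the paper bypasses it in a much simpler way.

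The gap is the relation you write as ``$F_1(\theta)$ is essentially $F_0(\theta)$ multiplied by $\E^{-\I t(\theta+\theta^{-1})}$''. This is false for your $F_t(\theta)=\sum_{k\ge 0}u(t,\nu_0(k))\theta^{-k}$: the evolution \eqref{evouproblem} mixes the channel $\nu_0$ with $\mathcal{A}_1$ and with all the other channels, so no function built from $u(t,\nu_0(k))$ alone transforms by a scalar multiplier under the time-$1$ map. Your attempt to recover that relation via a spectral representation $u(t,\alpha)=\int_{\mathbb T}\E^{\I t\lambda(\theta)}\varphi(\alpha,\theta)\,\mathbf g(\theta)\,d\theta$ is precisely the step you flag as the ``main obstacle'', and rightly so: nothing in Section~\ref{section22} provides a Plancherel-type completeness for the $\varphi(\cdot,\theta)$, so this is a substantial detour with no payoff.

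The paper's key observation is that you do not need the spectral theorem at all. Define directly
\[
\Phi(t,\theta)=\sum_{\alpha\in\mathcal A}u(t,\alpha)\,\psi_{\nu_0}(\alpha,\theta),
\]
where $\psi_{\nu_0}(\cdot,\theta)$ is the eigenfunction \eqref{varphialpha} with $\mathbf n=S^{-1}(\theta)\delta_{\nu_0}$. Then $\partial_t\Phi=\I\lambda(\theta)\Phi$ follows by one line from \eqref{evouproblem} and selfadjointness of $L$, giving the exact identity $\Phi(1,\theta)=\E^{\I\lambda(\theta)}\Phi(0,\theta)$. Now split $\Phi=A+B$ with $B(t,\theta)=\sum_{k>K_0}u(t,\nu_0(k))\theta^{k}$ and $A$ the remainder (all of $\mathcal A_1$, all other channels, and the $\theta^{-k}$ tails). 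Lemma~\ref{Lemmarational} makes every coefficient in $A$ rational in $\theta$, so $A$ is analytic for $|\theta|\ge 1$ off a finite set and has indicator $\le 0$; $B(t,\cdot)$ is entire of type $\le (2+\epsilon)^{-1}$ at $t\in\{0,1\}$ by Theorem~\ref{Thtypeformula}. Evaluating the exact relation along $\theta=\I y$ gives $h_{\Phi(1)}(\pi/2)=1+h_{\Phi(0)}(\pi/2)\ge 1-\tfrac{1}{2+\epsilon}>\tfrac{1}{2+\epsilon}$, contradicting $h_{\Phi(1)}(\pi/2)\le \tfrac{1}{2+\epsilon}$ unless $\Phi\equiv 0$. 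A short coda (clearing the finitely many poles of $A$ with a fixed polynomial and matching Laurent tails) then forces $B\equiv 0$, hence $u(t,\nu_0(k))=0$ for $k\gg K_0$, and \eqref{eqchannel} propagates this down to $k\ge 0$.

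In short: replace your spectral integral by the pairing $\Phi$; then the multiplier relation is exact and global, the ``other channels'' are absorbed into the rational piece $A$ (not into pole-clearing for $F_t$), and your obstacle disappears.
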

\begin{proof}To prove this result we will follow a similar strategy as in \cite{JLMP} in theorem 2.3.\\ \\
Let $L$ be the operator of \eqref{evouproblem}, $L:\l^2(\mathcal{A})\to\l^2(\mathcal{A})$, then the solution $u(t,\alpha)$ is defined by
\begin{equation*}
u(\cdot,t)=\E^{\I Lt}u(\cdot,0)
\end{equation*}
and hence $(u(t,\alpha))_{\alpha\in\mathcal{A}}$ is in $\l^2(\mathcal{A})$ for all $t\in[0,1]$.\\ \\
Consider the auxiliar function $\Phi$ 
\begin{equation}\label{Phi1}
\Phi(t,\theta)=\sum_{\alpha\in\mathcal{A}} u(t,\alpha)\psi_{\nu_0}(\alpha,\theta)
\end{equation}
and suppose that $\Phi\neq 0$ to get a contradiction, where $\psi_{\nu_0}(\alpha,\theta)$ is defined as in \eqref{varphialpha} with $\textbf{n}=S^{-1}(\theta)(\delta_{\nu_0}(\sigma))_{\sigma\in\mathcal{C}}$, here $\delta_{\nu_0}(\cdot)$ is the Kronecker delta. Thus
\begin{equation*}
\begin{split}
-\I\partial_t\Phi(t,\theta)&=-\I\partial_t\sum_{\alpha\in\mathcal{A}}u(t,\alpha)\psi_{\nu_0}(\alpha,\theta)=-\I\langle\partial_t(u(t),\psi_{\nu_0}(\theta)\rangle\\
&=\langle u(t),\lambda(\theta)\psi_{\nu_0}(\theta)\rangle=\lambda(\theta)\Phi(t,\theta)
\end{split}
\end{equation*}
where $\lambda(\theta)$ is defined in \eqref{lambdapara}. In particular we have obtained
\begin{equation}\label{Phi2}
\Phi(t,\theta)=\Phi(0,\theta)\E^{\I\lambda(\theta)t}
\end{equation}
On the other hand, using the definition of $\psi_{\nu_0}(\alpha,\theta)$,  \eqref{Phi1} can be rewritten as 
\begin{equation*}
\begin{split}
\Phi(t,\theta)&=\sum_{\alpha\in\mathcal{A}}u(t,\alpha)\psi_{\nu_0}(\alpha,\theta)\\
&=\sum_{\alpha\in\mathcal{A}_1}u(t,\alpha)\psi_{\nu_0}(\alpha,\theta)+\sum_{\alpha_\in\mathcal{A}_0}u(t,\alpha)\psi_{\nu_0}(\alpha,\theta)\\
&=\Big(\sum_{\alpha\in\mathcal{A}_1}u(t,\alpha)\psi_{\nu_0}(\alpha,\theta)+\sum_{\nu\in\mathcal{C}}\sum_{k\geq 1}^{K_0}u(t,\nu(k))\psi_{\nu_0}(\nu(k),\theta)+\\
&\sum_{\nu\in\mathcal{C}}\sum_{k>K_0}u(t,\nu(k))s^{-1}(\nu,\nu_0;\theta)\theta^{-k}\Big)+\Big(\sum_{k>K_0}u(t,\nu_0(k))\theta^k\Big)\\
&=A(t,\theta)+B(t,\theta)
\end{split}
\end{equation*}
here $s^{-1}(\nu,\sigma;\theta)$ denotes the entries of the matrix $S^{-1}(\theta)$. The functions $\Phi(t,\theta)$ are in $L^2(\mathbb{T})$ for all $t\in[0,1]$. Moreover, by lemma \ref{Lemmarational} we have that $A(t,\theta)$ converges for $|\theta|\geq1$ and $B(t,\theta)$ for $|\theta|\leq1$. For $t=0$ and $t=1$, $B(t,\theta)$ also converges for $|\theta|>1$, due to \eqref{udecay}. Thus $\Phi(0,\theta)$ and $\Phi(1,\theta)$ are analytic functions in $\mathbb{C}\setminus\overline{\mathbb{D}}$, except maybe at
$\theta\in\mathscr{O}$ (see \eqref{Odef}). Thus, using Corollary 3.2 in \cite{JLMP}, one can extend this convergence property to $\Phi(t,\theta)$ for all $t\in[0,1]$.\\
Now, using \eqref{sumasigma}
\begin{equation*}
\limsup_{|\theta|\to\infty}\frac{\log|A(t,\theta)|}{|\theta|}\leq\max\{\sigma_f,\sigma_g\}
\end{equation*}
where
\begin{equation*}
\begin{split}
&f(\theta)=\sum_{\alpha\in\mathcal{A}_1}u(t,\alpha)\psi_{\nu_0}(\alpha,\theta)+\sum_{\nu\in\mathcal{C}}\sum_{k\geq 1}^{K_0}u(t,\nu(k))\psi_{\nu_0}(\nu(k),\theta)\\
&g(\theta)=\sum_{\nu\in\mathcal{C}}\sum_{k>K_0}u(t,\nu(k))s^{-1}(\nu,\nu_0;\theta)\theta^{-k}=\sum_{\nu\in\mathcal{C}}o(\theta^{-K_0})s^{-1}(\nu,\nu_0;\theta)
\end{split}
\end{equation*}
Notice that by lemma \ref{Lemmarational} $f$ and $s^{-1}(\nu,\nu_0;\theta)$ are rational functions with respect to $\theta$, whence $\sigma_f,\sigma_g\leq 0$ and
\begin{equation}\label{sigmaA}
\limsup_{|\theta|\to\infty}\frac{\log|A(t,\theta)|}{|\theta|}\leq 0
\end{equation}
It follows from \eqref{udecay} and theorem \ref{Thtypeformula} that $B(t,\theta)=\sum_{k>K_0}u(t,\nu_0(k))\theta^k$ are entire functions of exponential type at most $(2+\epsilon)^{-1}$ for $t\in\{0,1\}$. In particular,
\begin{equation*}
\limsup_{r\to\infty}\frac{\log|\Phi(t,r\E^{\I\varphi})|}{r}\leq\frac{\log|B(t,r\E^{\I\varphi})|}{r}\leq\frac{1}{2+\epsilon},\qquad t\in\{0,1\},\quad\varphi\in[0,2\pi]
\end{equation*}
whence, on one hand by \eqref{hineq} we have
\begin{equation*}
\begin{split}
0&\leq \limsup_{r\to\infty}\frac{\log|\Phi(t,r\E^{\I\pi/2})|}{r}+\limsup_{r\to\infty}\frac{\log|\Phi(t,r\E^{-\I\pi/2})|}{r}\\
&\leq\limsup_{r\to\infty}\frac{\log|\Phi(t,r\E^{\pm \I\pi/2})|}{r}+\frac{1}{2+\epsilon},\qquad t\in\{0,1\}
\end{split}
\end{equation*}
And on the other hand, using \eqref{Phi2}
\begin{equation*}
\begin{split}
\limsup_{y\to\infty}\frac{\log|\Phi(1,\I y)|}{y}&=1+\limsup_{y\to\infty}\frac{\log|\Phi(0,\I y)|}{y}\\
&\geq 1-\frac{1}{2+\epsilon}>\frac{1}{2+\epsilon}
\end{split}
\end{equation*}
Thus, we have a contradiction unless $\Phi\equiv 0$.\\ \\
\emph{We claim that $\Phi\equiv 0$ implies $B(t,\theta)=0$ for all $t\in[0,1]$}\\ \\
Suppose that $B(t,\theta)\neq 0$ to get a contradicttion. Since $\Phi\equiv0$, we have that $A(t,\theta)=-B(t,\theta)$, and by lemma \ref{lemmaUpoles} there exists a polinomial $P(z)=\sum_{j=0}^Np_jz^j$, with all its roots $a_j$ simple and $|a_j|<1$ such that $P(\theta)A(t,\theta)=\sum_{k=-\infty}^{N_0}d_k\theta^k$ and  $P(\theta)B(t,\theta)=\sum_{k> K_0}c_k\theta^k$. This implies that for $k\gg K_0$, $c_k=0$. That is, let $\mathscr{P}$ be the matrix 
\begin{equation*}
\mathscr{P}=\begin{pmatrix}0&-p_{N-1}&-p_{N-2}&\dots&-p_1&-p_0&0&0&\dots\\
0&0&-p_{N-1}&-p_{N_2}&\dots&-p_1&-p_0&0&\dots\\
\vdots&\vdots&\ddots&\ddots&\ddots&\ddots&\ddots&\ddots&\ddots\end{pmatrix}
\end{equation*}
Then the condition $c_k=0$, $k\gg K_0$, is equivalent to $\mathscr{P}(u(t,\nu_0(k)))_{k\gg K_0}=(u(t,\nu_0(k)))_{k\gg K_0}$. Using \eqref{sigmaA} and \eqref{hineq} we get $\sigma_{B(t,\theta)}=0$, which is a contradiction with the fact that $c_k=0$, unless $u(t,\nu_0(k))=0$ for $k\gg K_0$ and hence for all $k\geq 0$.
\end{proof}

%%%%%%
%%%%%%

\section{Uniqueness of the solution of the Schr\"odinger equation on finite graphs and applications}\label{section4}

In general it is not true that a solution $u(t,\alpha)$ of \eqref{evouproblem} is trivial if it is zero on one channel. Thus, the aim of this section is to study what happens on $\mathcal{A}_1$ in such cases. \\ \\
If we consider $\mathcal{F}$ as  a finite set of particles and we study small oscillations around its equilibrium position, see \cite{LM2,MS}, then the problem is reduced to  the spectral problem
\begin{equation*}
\lambda x(\alpha)=\sum_{\eta\in\mathcal{F}}L(\alpha,\eta)x(\eta),\quad \alpha\in\mathcal{F}
\end{equation*}
Here $L(\cdot,\cdot)$ is a symmetric real valued  and positive matrix. There is a graph $\mathcal{G}=(\mathcal{F},\Pi)$ associated to this problem. Here the elements of $\mathcal{F}$ denote the vertices of the graph and $\Pi$ is the set of the edges. It is given by the matrix $L(\cdot,\cdot)$, that is, there is an edge $(\alpha,\beta)\in\Pi$ if $\alpha\neq\beta$ and $L(\alpha,\beta)\neq 0$.\\ \\
In the sequel we will use $\mathcal{F}$ to denote a finite set of particles and $\mathcal{A}$ to denote sets as in section \ref{section22}.\\
Now if we look at the dynamics of this problem, that is
\begin{equation}\label{eqgraph}
\partial_t u(t,\alpha)=\I\sum_{\eta\in\mathcal{F}}L(\alpha,\eta)u(t,\alpha),\quad \alpha\in\mathcal{F}, \quad t\in[0,1]
\end{equation}
Then the solution $u(t,\alpha)$ of \eqref{eqgraph} is given by
\begin{equation}
u(t)=\E^{\I tL}u(0)
\end{equation}
here $u(0)=(u(0,\alpha))_{\alpha\in\mathcal{F}}$ and the dimension of the vector space $V$ formed by all solutions of \eqref{eqgraph} is dim($V)=\sharp\mathcal{F}$. A natural question is:  what happens if for some $\alpha\in\mathcal{F}$, $u(t,\alpha)=0$, $t\in[0,1]$? Is it true that $u(t,\beta)=0$ for all $\beta\in\mathcal{F}$ and $t\in[0,1]$? And if this is not the case, then  how big is dim($V$) with these extra conditions?\\ \\
Consider the following example to illustrate this problem
\begin{figure}[H]
\begin{displaymath}
\xymatrix{\beta_1\ar@{-}[r]\ar@{-}[d]&\alpha_1\ar@{-}[d]\\ \alpha_2\ar@{-}[r]&\beta_2}
\end{displaymath}\caption{}\label{fig5}
\end{figure} 
Here $u(t,\beta_j)=0$, $j=1,2$, $t\in[0,1]$. Thus we have
\begin{equation}\label{eqgraphboundary}
\partial_t u(t,\alpha_j)=\I L(\alpha_j,\alpha_j)u(t,\alpha_j),\quad\text{whence }\quad u(t,\alpha_j)=\E^{\I tL(\alpha_j,\alpha_j)}u(0,\alpha_j)
\end{equation}
here $j=1,2$, then by the boundary condition  $u(t,\beta_j)=0$, and the equation \eqref{eqgraph} at $\beta_j$, $j=1,2$ 
\begin{equation}\label{eqgraph2}
0=L(\beta_j,\alpha_1)u(t,\alpha_1)+L(\beta_j,\alpha_2)u(t,\alpha_2),\quad j=1,2
\end{equation}
Now using the expresion \eqref{eqgraphboundary} into \eqref{eqgraph2}
\begin{equation*}
0=\E^{\I tL(\alpha_1,\alpha_1)}L(\beta_j,\alpha_1)u(0,\alpha_1)+\E^{\I tL(\alpha_2,\alpha_2)}L(\beta_j,\alpha_2)u(0,\alpha_2)
\end{equation*}
And there are two options either $L(\alpha_1,\alpha_1)\neq L(\alpha_2,\alpha_2)$ which implies
\begin{equation*}
L(\beta_j,\alpha_k)u(0,\alpha_k)=0,\quad j,k=1,2
\end{equation*}
But there is an edge from $\alpha_k$ to $\beta_j$, thus $L(\beta_j,\alpha_k)\neq 0$ and $u(0,\alpha_k)=0$, in particular $u(t)= 0$, $t\in[0,1]$ and dim($V$)=0.\\ \\
Either $L(\alpha_1,\alpha_2)=L(\alpha_2,\alpha_2)$ then
\begin{equation*}
\begin{cases}0=&L(\beta_1,\alpha_1)u(0,\alpha_1)+L(\beta_1,\alpha_2)u(0,\alpha_2)\\
0=&L(\beta_2,\alpha_1)u(0,\alpha_1)+L(\beta_2,\alpha_2)u(0,\alpha_2)\end{cases}
\end{equation*}
And dim$(V)=0,1$ depending on the rank of the matrix $(L(\beta_j,\alpha_k))_{j,k=1,2}$ . \\ \\
The generalization of this result is shown in  theorem \ref{Thuniqgraph}. Before we formulate it, we need some definitions and
%
%%%
%
%
%
the concept of extension of a subgraph, given in \cite{LM2,MS}. We will use their notation as well. In order that our article is self-contained, we repeat it here.\\ \\
Let $\mathcal{F}$ be a finite set of points and let $\mathcal{G}=(\mathcal{F},\Pi)$ be a connected graph formed by the set $\mathcal{F}$ and the edges $(\alpha,\beta)\in\Pi$, where $\alpha,\beta\in\mathcal{F}$. Given a set $\mathcal{B}\subset\mathcal{F}$, we want to extend this set to a bigger one as follows:\\
Let $\beta\in\mathcal{B}$ be such that there exists a unique  $\alpha\in\mathcal{F}\setminus\mathcal{B}$ with $(\alpha,\beta)\in\Pi$
\begin{figure}[H]
\begin{displaymath}
\xymatrix{\bullet_\beta\ar@{-}[r] &\circ_\alpha\ar@{-}[r]&\circ\ar@{-}[r]&\bullet}
\end{displaymath}\caption{}
\end{figure}
Here $(\bullet)$ denotes the elements of $\mathcal{B}$. Then we say that $\mathcal{B}^{(1)}=\mathcal{B}\cup\{\alpha\}$ is an \emph{extension} of $\mathcal{B}$. We can iterate this process: $\mathcal({B}^{(k)})^{(1)}=\mathcal{B}^{(k+1)}$ and we will obtain a chain of prolongations
\begin{equation*}
\mathcal{B}\subset\mathcal{B}^{(1)}\subset\dots\subset\mathcal{B}^{(p)}
\end{equation*}
If $\mathcal{B}^{(p)}$ does not have an extension, we say that it is \emph{maximal} and we denote it by $[\mathcal{B}]$, this set depends only on $\mathcal{B}$ as it is shown in the following lemma, see \cite{LM2} and \cite{MS}:
\begin{Lemma}\label{Luniqueness}
Given a subset $\mathcal{B}\subset\mathcal{F}$, all maximal chains that begins at $\mathcal{B}$ end with the same set $[B]\subset\mathcal{F}$.
\end{Lemma}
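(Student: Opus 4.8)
The plan is to prove this by a confluence / diamond-lemma argument, exactly as one does for abstract rewriting systems. Think of the one-step extension operation as a relation: from a set $\mathcal{D}$ with $\mathcal{B}\subseteq\mathcal{D}\subseteq\mathcal{F}$ one may pass to $\mathcal{D}\cup\{\alpha\}$ whenever $\alpha\in\mathcal{F}\setminus\mathcal{D}$ is the \emph{unique} neighbour outside $\mathcal{D}$ of some $\beta\in\mathcal{D}$. Call such an $\alpha$ \emph{addable to $\mathcal{D}$}. Since $\mathcal{F}$ is finite, every chain of extensions terminates, so maximal chains exist; the only content is that the terminal set is independent of the choices. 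By a standard induction on $\sharp\mathcal{F}-\sharp\mathcal{B}$ (the number of steps remaining), it suffices to prove the \emph{local} statement: if $\alpha_1$ and $\alpha_2$ are both addable to $\mathcal{D}$ and $\alpha_1\neq\alpha_2$, then $\alpha_1$ is still addable to $\mathcal{D}\cup\{\alpha_2\}$ and $\alpha_2$ is still addable to $\mathcal{D}\cup\{\alpha_1\}$, so that the two one-step extensions can be reconciled after one further step each, landing in the common set $\mathcal{D}\cup\{\alpha_1,\alpha_2\}$.

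The first step is to set up this induction cleanly: assume the lemma holds for all pairs $(\mathcal{F},\mathcal{B}')$ with $\sharp\mathcal{F}-\sharp\mathcal{B}'<\sharp\mathcal{F}-\sharp\mathcal{B}$, take two maximal chains starting at $\mathcal{B}$, look at their respective first steps $\mathcal{B}\cup\{\alpha_1\}$ and $\mathcal{B}\cup\{\alpha_2\}$; if $\alpha_1=\alpha_2$ we apply the inductive hypothesis to $\mathcal{B}\cup\{\alpha_1\}$ directly, and if $\alpha_1\neq\alpha_2$ we use the local statement to produce a chain through $\mathcal{B}\cup\{\alpha_1,\alpha_2\}$, extend it to any maximal chain, and then apply the inductive hypothesis twice (to $\mathcal{B}\cup\{\alpha_1\}$ and to $\mathcal{B}\cup\{\alpha_2\}$) to conclude that all three maximal chains terminate at the same set. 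The second step is to prove the local statement itself. Here one writes out what "addable" means: $\alpha_i$ is addable to $\mathcal{D}$ because there is $\beta_i\in\mathcal{D}$ whose unique neighbour in $\mathcal{F}\setminus\mathcal{D}$ is $\alpha_i$. Adding $\alpha_2$ to $\mathcal{D}$ only shrinks the "outside" set, so the neighbour of $\beta_1$ outside $\mathcal{D}\cup\{\alpha_2\}$ is still unique — it is still $\alpha_1$ (note $\alpha_1\neq\alpha_2$ guarantees $\alpha_1$ remains outside) — hence $\alpha_1$ is addable to $\mathcal{D}\cup\{\alpha_2\}$, and symmetrically.

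The one subtlety to watch is the edge case $\beta_1=\beta_2$: if a single $\beta\in\mathcal{D}$ had two distinct neighbours $\alpha_1,\alpha_2$ outside $\mathcal{D}$, then by definition neither would be addable via $\beta$, so this situation simply does not arise; more carefully, if $\beta_1=\beta_2=\beta$ then "the unique neighbour of $\beta$ outside $\mathcal{D}$" is simultaneously $\alpha_1$ and $\alpha_2$, forcing $\alpha_1=\alpha_2$, contrary to assumption. So under $\alpha_1\neq\alpha_2$ we automatically have $\beta_1\neq\beta_2$, and the monotonicity argument of the previous paragraph goes through without interference. I expect this bookkeeping — making sure the uniqueness of the outside-neighbour is genuinely preserved when we enlarge $\mathcal{D}$, and correctly threading the two applications of the inductive hypothesis — to be the only place requiring care; the rest is the routine Newman-type "local confluence plus termination implies confluence" packaging, and since termination here is trivial ($\mathcal{F}$ finite and each step strictly enlarges the set), the argument is short.
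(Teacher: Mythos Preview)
The paper does not actually prove this lemma; it simply states it with a reference to \cite{LM2} and \cite{MS}. Your proposal, by contrast, supplies a complete and correct self-contained argument via the standard Newman-style confluence scheme: termination is immediate because $\mathcal{F}$ is finite and each step strictly enlarges the set, and your local-confluence step is sound --- if $\alpha_1\neq\alpha_2$ are both addable to $\mathcal{D}$ via witnesses $\beta_1,\beta_2$, then necessarily $\beta_1\neq\beta_2$, and since the unique outside-neighbour of $\beta_1$ was $\alpha_1\neq\alpha_2$, passing to $\mathcal{D}\cup\{\alpha_2\}$ leaves $\alpha_1$ as the (still unique) outside-neighbour of $\beta_1$. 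The inductive packaging you describe is the usual one and is correctly threaded.

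So your proof is strictly more than what the paper offers. One cosmetic remark: you might also handle the base case explicitly (if $\mathcal{B}$ itself admits no extension, both maximal chains are the trivial chain $\mathcal{B}$), though this is implicit in your induction on $\sharp\mathcal{F}-\sharp\mathcal{B}$.
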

In addition of the previous concepts from \cite{LM2,MS}, we  give some new ones that we need afterwards to set our main result of this section, theorem \ref{Thuniqgraph}.\\ \\
Remember that our graph was connected, thus we can consider the connected components of the graph $\mathcal{G}^\prime$ which results from $\mathcal{F}\setminus[\mathcal{B}]$. We will call each connected component of $\mathcal{G}^\prime$ a \emph{branch} and for each $\alpha\in\mathcal{F}\setminus[\mathcal{B}]$ we will denote them by $\gamma_\alpha$, where $\alpha\in\gamma_\alpha$. Notice that  $\gamma_\alpha=\gamma_{\alpha^\prime}$ if and only if $\alpha\in\gamma_{\alpha^\prime}$ and $\alpha^\prime\in\gamma_\alpha$
\begin{figure}[H]
\begin{displaymath}
\xymatrix{\bullet\ar@{-}[r]&\circ\ar@{-}[r]&\circ} \qquad \qquad \xymatrix{\ast\ar@{-}[r]&\ast\ar@{-}[r]&\ast}
\end{displaymath}\caption{}\label{graph1}
\end{figure}
Here $(\ast)$ denotes the elements of $[\mathcal{B}]$ and it can be observed that  $[\mathcal{B}]=\mathcal{F}$, which in particular implies that there  are no branches.\\ \\
Given a set $\mathcal{B}\subset\mathcal{F}$ we have defined the concept of branch, which depends on $[\mathcal{B}]$, so the natural question is if there is some notion which allow us to gather the branches. Thus we define the \emph{cluster}. Let $\beta\in[\mathcal{B}]$, we call $(\beta)$ a cluster and
\begin{equation*}
(\beta)=\cup_{\alpha\in J}\cup_{\xi\in\gamma_\alpha}\{\xi\},\quad J=\{\alpha\in\mathcal{F}\setminus[\mathcal{B}]:\text{ } (\alpha,\beta)\in\Pi\}
\end{equation*}
In other words, the cluster $(\beta)$ is the set of the particles which form the branches that are attached to $\beta$.\\ \\ 
It happens that a branch is attached to $n$ different clusters, $n\geq 1$, then we say that the branch is of \emph{order }$n-1$, i.e., ord$(\gamma_\alpha)=n-1$, for $\alpha\in\mathcal{F}\setminus[\mathcal{B}]$.
\begin{figure}[H]
\begin{displaymath}
\xymatrix{\circ \ar@{-}[dr]& &\circ\ar@{-}[dl]& &\circ\ar@{-}[dl]\\
&\bullet\ar@{-}[dl]\ar@{-}[dr]\ar@{-}[r]&\circ_\alpha\ar@{-}[r]&\bullet\ar@{-}[dr]& \\
\circ& &\circ& &\circ }\qquad\qquad \xymatrix{\circ \ar@{-}[dr]& &\circ\ar@{-}[dl]& &\circ\ar@{-}[dl]\\
&\ast\ar@{-}[dl]\ar@{-}[dr]\ar@{-}[r]&\circ_\alpha\ar@{-}[r]&\ast\ar@{-}[dr]& \\
\circ& &\circ& &\circ }
\end{displaymath}\caption{}\label{graph2}
\end{figure}
Here $\mathcal{B}=[\mathcal{B}]$, there are two different clusters  and each point of $\mathcal{F}\setminus[\mathcal{B}]$ forms a branch. Each branch is of order zero, except the one formed by the point $\alpha$ which is of order one, since it belongs to two different cluster as we can see in the following figure
\begin{figure}[H]
\begin{displaymath}
\xymatrix{\circ\ar@{-}[dr]& &\circ\ar@{-}[dl]\\
 &\ar@{-}[r]&\circ_\alpha\\
\circ\ar@{-}[ur]& &\circ\ar@{-}[ul]}_\text{cluster 1}\qquad\qquad\xymatrix{&&\circ\ar@{-}[dl]\\
\circ_\alpha\ar@{-}[r]& \\
&& \circ\ar@{-}[ul]}_\text{cluster 2}
\end{displaymath}\caption{}
\end{figure}
%
%%%
%
%
%
%%%%

\begin{Th}\label{Thuniqgraph}
The dimension of the vector space $V_{\mathcal{B}}$ formed by  the solutions $u(t,\alpha)$ of \eqref{eqgraph} such that $u(t,\beta)=0$, $\beta\in\mathcal{B}$, $t\in[0,1]$ is bounded as follows
\begin{equation}\label{thineq}
\sharp(\mathcal{F}\setminus[\mathcal{B}])-\sum_{i=1}^M\mathfrak{K}_i\leq \text{dim}(V_{\mathcal{B}})\leq\sharp(\mathcal{F}\setminus[\mathcal{B}])+\sum_{i=1}^N\text{ord}(\gamma_i)\mathfrak{N}_i-\sum_{i=1}^M\mathfrak{K}_i
\end{equation}
where M denotes the number of different clusters with respect to $[\mathcal{B}]$, N the number of different branches $\gamma_i$ with respect to $[\mathcal{B}]$, $\mathfrak{K}_i$ is the number of different eigenvalues that comes from the restriction of the matrix $L(\cdot,\cdot)$ in \eqref{eqgraph} to the cluster $(\beta_i)$ and $\mathfrak{N}_i$ is the number of different eigenvalues that comes from the restriction of the matrix $L(\cdot,\cdot)$ in  \eqref{eqgraph} to the branch  $\gamma_{i}$.
\end{Th}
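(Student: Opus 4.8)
The plan is to analyze the linear system obtained by imposing the vanishing conditions $u(t,\beta)=0$ for $\beta\in\mathcal{B}$ on the general solution $u(t)=\E^{\I tL}u(0)$, and to propagate these conditions through the extension process described above. First I would observe that the extension operation is exactly the right tool: if $\beta\in\mathcal{B}$ has a unique neighbor $\alpha\notin\mathcal{B}$, then equation \eqref{eqgraph} evaluated at $\beta$ reads $\partial_t u(t,\beta)=\I\sum_{\eta}L(\beta,\eta)u(t,\eta)$, and since $u(t,\beta)\equiv 0$ and every other neighbor $\eta$ of $\beta$ lies in $\mathcal{B}$ (hence $u(t,\eta)\equiv 0$), we get $L(\beta,\alpha)u(t,\alpha)\equiv 0$, and $L(\beta,\alpha)\neq 0$ forces $u(t,\alpha)\equiv 0$. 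Iterating, $u(t,\cdot)$ vanishes on all of $[\mathcal{B}]$. So without loss of generality the "hard" unknowns are those indexed by $\mathcal{F}\setminus[\mathcal{B}]$, giving the naive count $\sharp(\mathcal{F}\setminus[\mathcal{B}])$ as a starting point for $\dim(V_{\mathcal{B}})$, which then has to be corrected.

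Next I would set up the remaining constraints. For each $\beta\in[\mathcal{B}]$ that has at least one neighbor outside $[\mathcal{B}]$ (equivalently, each $\beta$ anchoring a nonempty cluster $(\beta)$), equation \eqref{eqgraph} at $\beta$ still must hold; since $u(t,\eta)\equiv 0$ for $\eta\in[\mathcal{B}]$, this reduces, exactly as in the $2\times 2$ example, to
\begin{equation*}
\sum_{\alpha\in J_\beta} L(\beta,\alpha)\,u(t,\alpha)=0,\qquad t\in[0,1],
\end{equation*}
where $J_\beta=\{\alpha\in\mathcal{F}\setminus[\mathcal{B}]:(\alpha,\beta)\in\Pi\}$. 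Within a single branch $\gamma$, the restricted dynamics is governed by the submatrix $L_\gamma=(L(\alpha,\eta))_{\alpha,\eta\in\gamma}$, so $u(t,\cdot)|_\gamma=\E^{\I t L_\gamma}u(0)|_\gamma$; decomposing $u(0)|_\gamma$ in an eigenbasis of $L_\gamma$, the function $t\mapsto u(t,\alpha)$ is a finite exponential sum $\sum_j \E^{\I t\mu_j}(\text{coeff}_j)$, and the distinct frequencies $\mu_j$ are linearly independent as functions of $t$. Therefore the constraint above, for fixed $\beta$, must hold frequency-by-frequency: on each eigenspace of $L_\gamma$ (for each branch $\gamma$ attached to $\beta$), the corresponding spectral component of the solution must lie in the kernel of the map $\alpha\mapsto L(\beta,\alpha)$ restricted to that branch. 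The key combinatorial point is that a branch $\gamma$ of order $n-1$ is attached to $n$ distinct clusters, so it is subject to $n$ such families of constraints (one per anchoring $\beta$); this is where the term $\mathrm{ord}(\gamma_i)\mathfrak{N}_i=(n-1)\mathfrak{N}_i$ enters as a "slack" that keeps the upper bound from being as strong as the lower one.

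To get the two bounds I would count dimensions per branch and per cluster. Fix a branch $\gamma=\gamma_i$ and list its distinct $L_\gamma$-eigenvalues; grouping the single-frequency constraints by eigenvalue shows that each of the $\mathfrak{N}_i$ eigenspaces contributes at least one independent linear constraint coming from each anchoring cluster, but constraints attached to the \emph{same} cluster but to \emph{different} branches can coincide in their effect, so one cannot simply add them. For the \textbf{lower bound}: each cluster $(\beta_i)$, through the $\mathfrak{K}_i$ distinct eigenvalues of $L|_{(\beta_i)}$, can kill at most $\mathfrak{K}_i$ dimensions (one linear condition per distinct frequency appearing across the whole cluster), so at most $\sum_i\mathfrak{K}_i$ dimensions are lost overall, giving $\dim(V_\mathcal{B})\ge \sharp(\mathcal{F}\setminus[\mathcal{B}])-\sum_{i=1}^M\mathfrak{K}_i$. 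For the \textbf{upper bound}: one must \emph{at least} impose, for each branch $\gamma_i$ of order $\mathrm{ord}(\gamma_i)$ and each of its $\mathfrak{N}_i$ spectral components, the independent conditions coming from all but one of its anchoring clusters (that "one" being absorbed into the global cluster count), yielding a loss of at least $\sum_i \mathfrak{K}_i - \sum_i \mathrm{ord}(\gamma_i)\mathfrak{N}_i$; rearranging gives the stated right-hand side. I would make the "at most/at least one condition per distinct eigenvalue" statements precise using the linear independence of distinct exponentials $\E^{\I t\mu}$ together with the fact that $L(\beta,\alpha)\neq 0$ for every edge, so that each such condition is genuinely nontrivial on the relevant eigenspace.

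The main obstacle I anticipate is the bookkeeping of \emph{how the constraints from different anchoring clusters of a single branch interact}, and dually how constraints on different branches sharing a common cluster interact — i.e. proving that the conditions are "generically independent" in the way needed for the lower bound while "not more than counted" for the upper bound. Concretely, one must rule out degenerate coincidences among the linear functionals $\alpha\mapsto L(\beta,\alpha)$ restricted to eigenspaces, and handle the case where a branch eigenvalue coincides with a cluster eigenvalue or with an eigenvalue of a neighboring branch; I expect the cleanest route is to argue that each distinct frequency contributes exactly one scalar equation per cluster it feeds, and then the inequality \eqref{thineq} follows by a straightforward rank count over $\mathcal{F}\setminus[\mathcal{B}]$, with Lemma \ref{Luniqueness} guaranteeing that $[\mathcal{B}]$, the branches, and the clusters are well defined independently of the chosen extension chain.
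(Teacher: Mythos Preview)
Your proposal follows essentially the same strategy as the paper: reduce to $[\mathcal{B}]$ via the extension argument, use the boundary equation at each anchoring vertex $\beta\in[\mathcal{B}]$, expand the remaining unknowns spectrally, separate by distinct exponentials $\E^{\I t\mu}$, and then bound the dimension by a rank count. The architecture is right.

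Two points where the paper is sharper than your sketch and where you should tighten things. First, the paper works with the restriction $L_{(\beta)}$ to each \emph{cluster} rather than to each branch, writes the resulting constraint at $\beta$ explicitly as $A_\beta u(0)=0$ for a concrete $\mathfrak{K}\times n$ matrix $A_\beta$, and then \emph{proves} $\mathrm{rank}(A_\beta)=\mathfrak{K}$: if some row were a combination of the others, one would obtain a nontrivial linear dependence among the rows of $P^{-1}$, contradicting its invertibility (the hypothesis $(\beta)\neq\emptyset$ guarantees some $L(\beta,j)\neq0$). Your remark that ``$L(\beta,\alpha)\neq0$ for every edge, so each condition is genuinely nontrivial on the relevant eigenspace'' is not quite enough --- nontriviality of each row does not by itself give linear independence of the rows, and this rank claim is exactly what drives both inequalities. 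Second, for the upper bound the paper does not argue by a heuristic over-count; it assembles all the $A_{\beta_j}$ into a big matrix $T$ (so $\dim V_\mathcal{B}=\dim\ker T$, giving the lower bound from $\mathrm{rank}(T)\le\sum_i\mathfrak{K}_i$), and then builds a second matrix $T'$ by zeroing, in each $A_{\beta_j}$ with $j>1$, the rows corresponding to eigenvalues contributed by branches already seen in some earlier cluster. This $T'$ is block-diagonal by construction, its rank is exactly $\sum_i\mathfrak{K}_i-\sum_i\mathrm{ord}(\gamma_i)\mathfrak{N}_i$, and $V_\mathcal{B}\subset\ker T'$ gives the upper bound. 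Your ``all but one anchoring cluster'' description is the right intuition for why that formula appears, but the explicit $T\to T'$ construction is what makes the inequality rigorous.
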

If we look at the example in figure \ref{graph2}, the theorem tells us that $-1\leq\text{dim}(V_{\mathcal{B}})\leq 0$ if all the eigenvalues are different, thus dim($V_\mathcal{B})=0$, i.e, $u\equiv 0$.\\
For the example in figure \ref{fig5}  we have that if the eigenvalues are different then $-2\leq\text{dim}(V_{\mathcal{B}})\leq 0$, which means that $\text{dim}(V_{\mathcal{B}})=0$ and if there is only one eigenvalue then $0\leq\text{dim}(V_{\mathcal{B}})\leq 2$.\\ \\
Before proving the theorem we need a technical lemma
\begin{Lemma}\label{lemmaaux}
Let $u(t,\alpha)$ be a solution of \eqref{eqgraph} and let $\mathcal{B}\subset\mathcal{F}$  be such that
\begin{equation*}
u(t,\beta)=0,\quad \beta\in\mathcal{B},\quad t\in[0,1]
\end{equation*}
Then $u(t,\beta)=0$ for all $\beta\in[\mathcal{B}]$, $t\in[0,1]$.
\end{Lemma}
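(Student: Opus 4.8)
The plan is to prove Lemma \ref{lemmaaux} by induction along a maximal chain of extensions $\mathcal{B}\subset\mathcal{B}^{(1)}\subset\dots\subset\mathcal{B}^{(p)}=[\mathcal{B}]$, and by Lemma \ref{Luniqueness} it does not matter which maximal chain we pick. So it suffices to show the single-step statement: if $u(t,\beta)=0$ for all $\beta\in\mathcal{B}$ and all $t\in[0,1]$, and $\alpha\in\mathcal{F}\setminus\mathcal{B}$ is the \emph{unique} vertex outside $\mathcal{B}$ adjacent to some $\beta_0\in\mathcal{B}$, then $u(t,\alpha)=0$ for all $t\in[0,1]$ as well; iterating this gives $u\equiv 0$ on all of $[\mathcal{B}]$.

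First I would write down equation \eqref{eqgraph} at the vertex $\beta_0\in\mathcal{B}$. Since $u(t,\beta_0)\equiv 0$ we have $\partial_t u(t,\beta_0)\equiv 0$, so the equation reads
\begin{equation*}
0=\sum_{\eta\in\mathcal{F}}L(\beta_0,\eta)u(t,\eta)=L(\beta_0,\beta_0)u(t,\beta_0)+\sum_{\eta\in\mathcal{B}\setminus\{\beta_0\},\ \eta\sim\beta_0}L(\beta_0,\eta)u(t,\eta)+\sum_{\eta\notin\mathcal{B},\ \eta\sim\beta_0}L(\beta_0,\eta)u(t,\eta).
\end{equation*}
The first two sums vanish identically in $t$ because every vertex of $\mathcal{B}$ (including $\beta_0$) carries the zero solution. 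By hypothesis the third sum has exactly one term, namely $L(\beta_0,\alpha)u(t,\alpha)$, and $L(\beta_0,\alpha)\neq 0$ precisely because $(\alpha,\beta_0)$ is an edge of $\mathcal{G}$. Hence $u(t,\alpha)=0$ for all $t\in[0,1]$, which is the single extension step.

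Then I would set up the induction cleanly: let $\mathcal{B}=\mathcal{B}^{(0)}\subset\mathcal{B}^{(1)}\subset\dots\subset\mathcal{B}^{(p)}=[\mathcal{B}]$ be a maximal chain, assume as induction hypothesis that $u(t,\gamma)=0$ for all $\gamma\in\mathcal{B}^{(k)}$ and all $t\in[0,1]$, and apply the single-step argument to $\mathcal{B}^{(k)}$ and the new vertex $\alpha_{k+1}$ with $\mathcal{B}^{(k+1)}=\mathcal{B}^{(k)}\cup\{\alpha_{k+1}\}$; this yields $u(t,\alpha_{k+1})\equiv 0$, hence the hypothesis holds for $\mathcal{B}^{(k+1)}$. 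After $p$ steps we conclude $u(t,\beta)=0$ for all $\beta\in[\mathcal{B}]$ and all $t\in[0,1]$. Finally I invoke Lemma \ref{Luniqueness} to note the conclusion is independent of the chosen chain, so it is well posed as a statement about $[\mathcal{B}]$.

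There is really no hard analytic obstacle here — the argument is purely algebraic and combinatorial, using only that $\partial_t u(t,\beta)\equiv 0$ when $u(t,\beta)\equiv 0$ and that the extension step isolates a single nonzero coupling constant. The one point that deserves care is the bookkeeping of which edges are ``internal'' to $\mathcal{B}^{(k)}$ versus crossing to its complement: one must use the \emph{uniqueness} in the definition of an extension (there is exactly one $\alpha\notin\mathcal{B}^{(k)}$ adjacent to $\beta_0$) to guarantee the sum over external neighbours collapses to a single term, so that $L(\beta_0,\alpha_{k+1})u(t,\alpha_{k+1})=0$ with a nonzero coefficient forces $u(t,\alpha_{k+1})=0$ rather than merely a linear relation among several unknown values.
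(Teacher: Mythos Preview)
Your proof is correct and follows essentially the same approach as the paper: evaluate \eqref{eqgraph} at a vertex $\beta_0\in\mathcal{B}$, split the sum into the part over $\mathcal{B}$ (which vanishes) and the part over $\mathcal{F}\setminus\mathcal{B}$ (which by the definition of extension reduces to a single nonzero coupling), deduce $u(t,\alpha_1)=0$, and then iterate along a maximal chain to reach $[\mathcal{B}]$ via Lemma~\ref{Luniqueness}. Your write-up is in fact slightly more explicit about the induction and the role of uniqueness in the extension step than the paper's own proof.
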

\begin{proof}
Consider a chain of prolongations $\mathcal{B}^{(j)}$ of $\mathcal{B}$. Let $\{\alpha_1\}=\mathcal{B}^{(1)}\setminus\mathcal{B}$ and  $\beta\in\mathcal{B}$ be such that $L(\alpha_1,\beta)\neq 0$, then by \eqref{eqgraph}
\begin{equation*}
\begin{split}
0=&\partial_tu(t,\beta)=\I\sum_{\alpha\in\mathcal{F}}L(\beta,\alpha)u(t,\alpha)=\\
&\I(\sum_{\alpha\in\mathcal{B}}L(\beta,\alpha)u(t,\alpha)+\sum_{\alpha\in\mathcal{F}\setminus\mathcal{B}}L(\beta,\alpha)u(t,\alpha))=\\
&\I(0+L(\alpha_1,\beta)u(t,\alpha_1)),\quad t\in[0,1]
\end{split}
\end{equation*}
Thus $u(t,\alpha_1)=0$ and applying an inductively argument, it follows that for all $j\geq 1$, $u(t,\beta)=0$, $\beta\in\mathcal{B}^{(j)}$, $t\in[0,1]$. \\
Thus, by lemma \ref{Luniqueness}, after some $j_0$, $\mathcal{B}^{(j_0)}=[\mathcal{B}]$ and we obtain $u(t,\beta)=0$, $\beta\in[\mathcal{B}]$.
\end{proof}
\begin{proof}[Proof of Theorem~\ref{Thuniqgraph}.]
First of all notice that due to the previous lemma  dim$(V_{\mathcal{B}})=\text{dim}(V_{[\mathcal{B}]})$. \\
The problem \eqref{eqgraph} can be splitted into different independent pieces, that is the study of \eqref{eqgraph} restricted to each of its cluster. Thus, let $\beta\in[\mathcal{B}]$ and consider the restriction of \eqref{eqgraph} to the cluster $(\beta)\neq\emptyset$, that is:
\begin{equation}\label{lbeta}
\partial_t u(t,\alpha)=\I\sum_{\xi\in(\beta)}L(\alpha,\xi)u(t,\xi),\quad t\in[0,1],\quad \alpha\in(\beta)
\end{equation}
In what follows, to simplify notation, we denote by $L_{(\beta)}$ the matrix $L(\cdot,\cdot)$ of \eqref{eqgraph} restricted to the cluster $(\beta)$, in other words, $L_{(\beta)}$ is the matrix of \eqref{lbeta}.\\
For each $\alpha\in(\beta)$ we associate a number $j(\alpha):=j$, $1\leq j\leq n$, here $n=\sharp(\beta)$ and we will write $j$ instead of $\alpha$.\\
General solution of \eqref{lbeta} can be written in a matrix form as
\begin{equation}\label{u1}
u(t)=e^{itL_{(\beta)}}u(0)
\end{equation}
If we denote $P$ the matrix of the eigenvectors of $L_{(\beta)}$, then $P^{-1}L_{(\beta)}P=\textbf{diag}(\lambda_j)_{j=1}^n$, where $\lambda_1\leq\lambda_2\leq\dots\leq\lambda_n$ are the real  eigenvalues of $L_{(\beta)}$. This happens because $L(\cdot,\cdot)$ is a symmetric real matrix whence $L_{(\beta)}$ is real and symmetric as well. If $y_0=P^{-1}u(0)$, then \eqref{u1} turns into
\begin{equation*}
u(t)=\sum_{j=1}^n \E^{\I t\lambda_j}y_{0,j}p_j
\end{equation*}
where $y_{0,j}$ denotes de $j-$entry of the vector $y_0$ and the  $p_j$ are the columns of the matrix $P$, that is, the eigenvectors of $L_{(\beta)}$.\\ \\
Without lost of generality, we assume that $\{p_j\}_{j=1}^n$ forms an orthogonal system.\\ \\
Using now the boundary condition of the cluster $(\beta)$, that is $u(t,\beta)=0$, $t\in[0,1]$, we have
\begin{equation}\label{u3}
0=\sum_{j=1}^nL(\beta,j)u(t,j)
\end{equation}
Let $\{\mu_j\}_{j=1}^{\mathfrak{K}}$ be the set of eigenvalues of the matrix $L_{(\beta)}$ without any repetitions, that is $\mu_1<\mu_2<\dots<\mu_{\mathfrak{K}}$, then equation \eqref{u3} turns into
\begin{equation*}
\begin{split}
0=&\sum_{j=1}^nL(\beta,j)u(t,j)=\sum_{j=1}^n L(\beta,j)\sum_{m=1}^n\E^{\I t\lambda_m}y_{0,m}p_m(j)=\\
&\sum_{m=1}^n\E^{\I t\lambda_m}y_{0,m}\sum_{j=1}^nL(\beta,j)p_m(j)=\sum_{m=1}^\mathfrak{K}\E^{\I t\mu_m}\sum_{s\in \tilde{m}}y_{0,s}\sum_{j=1}^nL(\beta,j)p_s(j)
\end{split}
\end{equation*}
where $\tilde{m}:=\{j:\text{ }\lambda_j=\nu_m\}$ and $p_s(j)$ denotes the entry of the matrix $P$ which corresponds to the j-row and s-column.\\
In particular we have obtained 
\begin{equation}\label{A1}
\begin{split}
0=&\sum_{s\in\tilde{m}}y_{0,s}\sum_{j=1}^nL(\beta,j)p_s(j)=\sum_{s\in\tilde{m}}\big(\sum_{l=1}^n\{p_l^{-1}(s)\sum_{j=1}^nL(\beta,j)p_s(j)\}u(0,l)\big)\\
&=\sum_{l=1}^n\{\sum_{s\in\tilde{m}}p_l^{-1}(s)\sum_{j=1}^nL(\beta,j)p_s(j)\}u(0,l)\qquad 1\leq m\leq\mathfrak{K}
\end{split}
\end{equation}
Here $\{p^{-1}_l(s)\}_{l,s=1}^n$ denotes the entries of the matrix $P^{-1}$.\\
Thus equation \eqref{A1} can be written in matrix form as $A_\beta u(0)=0$, where $A_\beta$ is a $\mathfrak{K}\times n$ matrix. The rows $a(m)$ of matrix $A_\beta$ fullfill
\begin{equation}\label{defAbeta}
a(m)=\sum_{s\in\tilde{m}}p^{-1}(m)\sum_{j=1}^nL(\beta,j)p_s(j),\quad 1\leq m\leq \mathfrak{K}
\end{equation}
\emph{We claim that $\text{rank}(A_\beta)=\mathfrak{K}$.}\\ \\
Let $m$, $1\leq m\leq\mathfrak{K}$, and $s\in\tilde{m}$ be such that 
\begin{equation}\label{lcero}
\sum_{j=1}^nL(\beta,j)p_s(j)\neq 0
\end{equation}
otherwise, $L(\beta,j)=0$ for $1\leq j\leq n$, since the vectors $p_j$, $1\leq j\leq n$ form an orthogonal base for $\mathbb{R}^n$. Thus, if $L(\beta,j)=0$ for $1\leq j\leq n$ then $(\beta)=\emptyset$ which is a contradiction.\\
Assume that $\text{rank}(A_\beta)<\mathfrak{K}$ and pick $m,s$ such that the equation \eqref{lcero} is fullfilled. There are constants $\eta_j$, not all of them zero, such that 
\begin{equation*}
a(m)=\sum_{j=1,j\neq m}^\mathfrak{K}\eta_ja(j)
\end{equation*}
whence
\begin{equation*}
0=\sum_{j=1}^n\eta_j^\prime p^{-1}(j)
\end{equation*}
where
\begin{equation*}
\eta_j^\prime=\begin{cases}\eta_k\sum_{l=1}^nL(\beta,l)p_j(l)\quad&\text{if }j\in\tilde{k},\text{ }k\neq m\\
-\sum_{l=1}^nL(\beta,l)p_j(l)\quad &\text{if }j\in\tilde{m}\end{cases}
\end{equation*}
In particular, by the election of $m,s$, not all $\eta_j^{\prime}$ are zero, which is a contradiction with the fact that rank$(P^{-1})=n$.\\ \\
Define the matrix $T_{\sum_{i=1}^M\mathfrak{K}_i\times\sharp(\mathcal{F}\setminus[\mathcal{B}])}$ as
\begin{equation}\label{mT}
T_\alpha(j)=\begin{cases}a_\alpha^{(\beta_j)}(M_j-j),\quad &\text{if }\alpha\in(\beta_j)\text{ and }1\leq M_j-j\leq\mathfrak{K}_{\beta_j}\\
0\quad&\text{otherwise}\end{cases}
\end{equation}
Here $j$ is bounded by $\sum_{i=1}^n\mathfrak{K}_i\leq j<\sum_{i=1}^{n+1}\mathfrak{K}_i$, thus  $M_j=\sum_{i=1}^{n+1}\mathfrak{K}_i$, $(\beta_j)=(\beta_n)$ and $\mathfrak{K}_{\beta_j}=\mathfrak{K}_n$. Finally, $a_\alpha^{(\beta_j)}(\cdot)$ are the entries of the matrices $A_{\beta_j}$ defined in \eqref{defAbeta}.\\ \\
Now dim$(V_\mathcal{B})=\text{dim}(Ker(T))$ and $\text{rank}(T)\leq\sum_{i=1}^M\mathfrak{K}_i$, using $\sharp(\mathcal{F}\setminus[\mathcal{B}])=rank(T)+dim(Ker(T))$, the left inequality of \eqref{thineq} follows.\\ \\
To show the right inequality of \eqref{thineq},  first consider the matrices $A_{\beta_j}^\prime$, $1\leq j\leq M$.  Define $A^\prime_{\beta_1}=A_{\beta_1}$. For $j>1$ choose the branches $\gamma_i\subset(\beta_j)$, such that $ord(\gamma_i)\geq 1$ and $\gamma_i\subset(\beta_n)$, for some $n<j$ .\\
Fix the set $\Theta=\{\mu_m\}$, where $\mu_m$ are the eigenvalues which comes from those branches $\gamma_i$, i.e., the eigenvalues of $L_{\gamma_i}$.\\
Thus the rows $a^{\prime,(\beta_j)}(s)$ of the matrix $A_{(\beta_j)}^\prime$ are
\begin{equation*}
a^{\prime,(\beta_j)}\begin{cases}a^{(\beta_j)}(s),\quad&\text{if }\mu_s\notin\Theta\\
0\quad&\text{otherwise}\end{cases}
\end{equation*}
Define the matrix $T^{\prime}$ as we did for the matrix $T$ in \eqref{mT} but with the matrices $A^\prime_{(\beta_j)}$. We have obtained that if $u(t)\in V_{\mathcal{B}}$, then $u(t)\in Ker(T^\prime)$ by construction, whence
\begin{equation}\label{Tprime1}
dim(V_{\mathcal{B}})\leq dim(Ker(T^\prime))=\sharp(\mathcal{F}\setminus[\mathcal{B}])-rank(T^\prime)
\end{equation}
By construction, the matrix $T^\prime$ is diagonal block matrix where each block which is nonzero is equal to $A^\prime_{(\beta_j)}$, $1\leq j\leq M$. Thus 
\begin{equation}\label{Tprime2}
rank(T^\prime)=\sum_{j=1}^Mrank(A^\prime_{(\beta_j)})=\sum_{j=1}^M\mathfrak{K}_j-\sum_{j=1}^Nord(\gamma_j)\mathfrak{N}_j
\end{equation}
Where $N$ denotes the total number of different branches of $\mathcal{F}\setminus[\mathcal{B}]$, $\gamma_j$ are the branches of the system and $\mathfrak{N}_j$ the corresponding different number of eigenvalues of $L_{\gamma_j}$.\\
Using equation \eqref{Tprime2} in \eqref{Tprime1} the theorem is proved
\end{proof}
\begin{Cor}
In the hypothesis of theorem \ref{Thuniqgraph},
\begin{equation}
dim(V_\mathcal{B})=dim(\cap_{j=1}^MKer(A_{\beta_j}))
\end{equation}
where $M$ is the number of the clusters of the system with respect to $[\mathcal{B}]$ and the matrices $A_{\beta_j}$ are defined in \eqref{defAbeta}.
\end{Cor}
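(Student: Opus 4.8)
The plan is to deduce this Corollary directly from the construction already carried out in the proof of Theorem~\ref{Thuniqgraph}, rather than starting afresh. The key observation is that, by Lemma~\ref{lemmaaux}, $\dim(V_{\mathcal{B}})=\dim(V_{[\mathcal{B}]})$, and the system \eqref{eqgraph} with the vanishing condition on $[\mathcal{B}]$ decouples over the clusters $(\beta_1),\dots,(\beta_M)$ with respect to $[\mathcal{B}]$: a vertex not in $[\mathcal{B}]$ lies in exactly one branch, hence contributes to the equations of exactly the cluster(s) to which that branch is attached. So the first step is to make precise that an initial datum $u(0)$ gives a solution in $V_{\mathcal{B}}$ if and only if, for each $j$, the restriction of $u(0)$ to the cluster $(\beta_j)$ lies in $\operatorname{Ker}(A_{\beta_j})$, where $A_{\beta_j}$ is the $\mathfrak{K}_j\times n_j$ matrix built in \eqref{defAbeta} encoding the boundary condition $u(t,\beta_j)=0$, $t\in[0,1]$.

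Second, I would assemble these cluster-wise conditions into the single block matrix $T$ introduced in \eqref{mT}, whose kernel was identified in the proof with $V_{\mathcal{B}}$: we already have $\dim(V_{\mathcal{B}})=\dim(\operatorname{Ker}(T))$. The point is then simply that $\operatorname{Ker}(T)$, viewed inside $\ell^2(\mathcal{F}\setminus[\mathcal{B}])\cong\bigoplus_{j=1}^M\ell^2((\beta_j))$ — with the understanding that branches of positive order, belonging to several clusters, impose the conditions of each of those clusters — is exactly $\bigcap_{j=1}^M\operatorname{Ker}(A_{\beta_j})$, where each $A_{\beta_j}$ is extended by zero on the coordinates outside $(\beta_j)$. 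This is really an unwinding of the definition \eqref{mT} of $T$: its rows are precisely the rows $a_\alpha^{(\beta_j)}(\cdot)$ of the various $A_{\beta_j}$, placed in the block of columns indexed by $(\beta_j)$, so a vector is killed by $T$ iff it is killed by every row of every $A_{\beta_j}$, i.e. iff it lies in every $\operatorname{Ker}(A_{\beta_j})$.

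The main (and essentially only) subtlety is bookkeeping around branches of order $\geq 1$: such a branch $\gamma$ sits in two or more clusters, so its vertices appear in several diagonal blocks of $T$ through the matrices $A_{\beta_j}$, and one must check that interpreting $\bigcap_{j=1}^M\operatorname{Ker}(A_{\beta_j})$ with each $A_{\beta_j}$ padded by zeros on $\mathcal{F}\setminus[\mathcal{B}]\setminus(\beta_j)$ reproduces $\operatorname{Ker}(T)$ and does not double-count. Equivalently, the identity $\sharp(\mathcal{F}\setminus[\mathcal{B}])=\operatorname{rank}(T)+\dim\operatorname{Ker}(T)$ used in the theorem, together with the decomposition of $T$ into blocks $A_{\beta_j}$, is exactly the statement that $\operatorname{Ker}(T)=\bigcap_j\operatorname{Ker}(A_{\beta_j})$; so the corollary follows once the description of $T$ from \eqref{mT} is spelled out. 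I would therefore present the proof in three short moves: (i) recall $\dim(V_{\mathcal{B}})=\dim(V_{[\mathcal{B}]})$ and the cluster decoupling; (ii) recall $\dim(V_{\mathcal{B}})=\dim(\operatorname{Ker}(T))$ from the theorem's proof; (iii) observe that $\operatorname{Ker}(T)=\bigcap_{j=1}^M\operatorname{Ker}(A_{\beta_j})$ directly from the block structure \eqref{mT}, which gives the claimed equality of dimensions.
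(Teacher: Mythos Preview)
Your proposal is correct and matches the paper's intent: the paper states the corollary without a separate proof, expecting the reader to read it directly off the construction in the proof of Theorem~\ref{Thuniqgraph}, and your three moves (i)--(iii) do exactly that. One small remark: your sentence $\ell^2(\mathcal{F}\setminus[\mathcal{B}])\cong\bigoplus_{j=1}^M\ell^2((\beta_j))$ is not literally an isomorphism when branches of positive order are present (since such branches are counted in several clusters), but you clearly flag this and handle it correctly by padding each $A_{\beta_j}$ with zeros outside $(\beta_j)$, so the conclusion $\operatorname{Ker}(T)=\bigcap_j\operatorname{Ker}(A_{\beta_j})$ stands.
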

We give now  some results on the uniqueness of the solution $u(t,\alpha)$, $\alpha\in\mathcal{A}$ of the problem \eqref{evouproblem} depending on the zeros of $u(t,\alpha)$. We use the idea given in \cite{MS} in chapter 17, where they extend $\mathcal{A}_0$ to the whole system, that is $[\mathcal{A}_0]=\mathcal{A}$.\\ \\
If the zeros are in $\mathcal{A}_1$, then corollary  \ref{cor42} extends the solution on the channels uniquely. If the solution is zero on the channels, then we use corollary \ref{cor43}
\begin{Cor}\label{cor42}
Let $u(t,\alpha)$ be a solution of \eqref{evouproblem}. Set $\mathcal{F}=\mathcal{A}_1\cup\{\nu(1)\}_{\nu\in\mathcal{C}}$. If there exists some subset $\mathcal{B}\subset\mathcal{F}$, such that $[\mathcal{B}^\prime]=\mathcal{F}$ and $u(t,\beta)=0$ for all $\beta\in\mathcal{B}^\prime$, $t\in[0,1]$, where $\mathcal{B}^\prime=\mathcal{B}\setminus\{\nu(1):\text{ }\nu(1)\in\mathcal{B}\text{ and }\nu(0)\notin\mathcal{B},\text{ }\nu\in\mathcal{C}\}$.\\
Then $u(t,\alpha)=0$ for all $t\in[0,1]$ and $\alpha\in\mathcal{A}$.
\end{Cor}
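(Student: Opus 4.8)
The plan is to propagate the vanishing of $u$ in two stages: first across the finite part $\mathcal{F}=\mathcal{A}_1\cup\{\nu(1)\}_{\nu\in\mathcal{C}}$, by an argument in the spirit of Lemma~\ref{lemmaaux} but adapted to the fact that \eqref{evouproblem} restricted to $\mathcal{F}$ is \emph{not} a closed finite system (the equation at $\nu(1)$ also involves $\nu(2)\notin\mathcal{F}$), and then outward along each channel by means of the three-term recursion.

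\emph{Stage 1.} Consider the graph $\mathcal{G}=(\mathcal{F},\Pi)$ with edges inherited from $L$, and fix a maximal extension chain $\mathcal{B}'=\mathcal{B}'^{(0)}\subset\mathcal{B}'^{(1)}\subset\cdots\subset\mathcal{B}'^{(p)}$; by Lemma~\ref{Luniqueness} and the hypothesis $[\mathcal{B}']=\mathcal{F}$ we have $\mathcal{B}'^{(p)}=\mathcal{F}$. I will show by induction on $j$ that $u(t,\beta)=0$ for all $\beta\in\mathcal{B}'^{(j)}$ and $t\in[0,1]$; the case $j=0$ is the hypothesis. The key point is that the vertex $\alpha$ adjoined at a step $j\to j+1$ is the unique $\mathcal{F}$-neighbour of some $\beta^{(j)}\in\mathcal{B}'^{(j)}$ that necessarily lies in $\mathcal{A}_1$. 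Indeed, a vertex of the form $\nu(1)$ has $\nu(0)$ as its only neighbour inside $\mathcal{F}$, so it could serve as an extending vertex only if $\nu(0)\notin\mathcal{B}'^{(j)}$; but by the definition of $\mathcal{B}'$ (removal of exactly those $\nu(1)\in\mathcal{B}$ with $\nu(0)\notin\mathcal{B}$) together with the shape of the extension steps, whenever $\nu(1)\in\mathcal{B}'^{(j)}$ one also has $\nu(0)\in\mathcal{B}'^{(j)}$ --- this is the only place where the passage from $\mathcal{B}$ to $\mathcal{B}'$ is used. Since $\beta^{(j)}\in\mathcal{A}_1$, and since the only pairs $(\gamma,\delta)\in\mathcal{A}_1\times\mathcal{A}_0$ with $L(\gamma,\delta)\neq0$ are of the form $(\nu(0),\nu(1))$, every $\mathcal{A}$-neighbour of $\beta^{(j)}$ lies in $\mathcal{F}$. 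Evaluating \eqref{evouproblem} at $\beta^{(j)}$ and using the inductive hypothesis, all terms drop out except $\I L(\beta^{(j)},\alpha)u(t,\alpha)$, and $L(\beta^{(j)},\alpha)\neq0$ because $(\alpha,\beta^{(j)})\in\Pi$; hence $u(t,\alpha)=0$. This closes the induction, so $u(t,\beta)=0$ for all $\beta\in\mathcal{F}$ and $t\in[0,1]$; in particular $u(t,\nu(0))=u(t,\nu(1))=0$ for every $\nu\in\mathcal{C}$.

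\emph{Stage 2.} Fix $\nu\in\mathcal{C}$. In the notation \eqref{abnu}, equation \eqref{evouproblem} at $\nu(k)$ for $k\geq1$ reads $\partial_tu(t,\nu(k))=\I\big(-b_\nu(k-1)u(t,\nu(k-1))+a_\nu(k)u(t,\nu(k))-b_\nu(k)u(t,\nu(k+1))\big)$, with $b_\nu(k)\neq0$ for every $k$ since consecutive vertices of a channel interact. Arguing by induction: if $u(t,\nu(k-1))=u(t,\nu(k))=0$ on $[0,1]$ (true for $k=1$ by Stage 1), then the left-hand side and the first two terms on the right vanish, so $b_\nu(k)u(t,\nu(k+1))=0$, i.e.\ $u(t,\nu(k+1))=0$. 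Hence $u(t,\nu(k))=0$ for all $k\geq0$ and all $\nu\in\mathcal{C}$, which together with Stage 1 gives $u(t,\alpha)=0$ for every $\alpha\in\mathcal{A}_1\cup\mathcal{A}_0=\mathcal{A}$ and $t\in[0,1]$.

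The only delicate point is Stage 1, and within it the verification that the extension chain of $\mathcal{B}'$ never uses a vertex $\nu(1)$ as an extending vertex --- equivalently, that $\nu(1)\in\mathcal{B}'^{(j)}$ forces $\nu(0)\in\mathcal{B}'^{(j)}$. This is exactly where the definition of $\mathcal{B}'$ rather than $\mathcal{B}$ matters and where the analogy with the purely finite situation of Lemma~\ref{lemmaaux} would fail, because $\nu(1)$ has a hidden neighbour $\nu(2)$ outside $\mathcal{F}$; everything else is a routine reading-off of \eqref{evouproblem}.
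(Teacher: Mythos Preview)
Your proof is correct and follows the paper's two-stage approach: first the extension argument of Lemma~\ref{lemmaaux} on $\mathcal{F}$, then the three-term recursion along each channel. Your Stage~1 is more careful than the paper's one-line invocation of Lemma~\ref{lemmaaux}, explicitly checking that the pivot $\beta^{(j)}$ always lies in $\mathcal{A}_1$ (so that \eqref{evouproblem} at $\beta^{(j)}$ does not involve the hidden vertex $\nu(2)$); this is precisely the role of passing from $\mathcal{B}$ to $\mathcal{B}'$, and the paper leaves this verification implicit.
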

\begin{proof}
A simple application of lemma \ref{lemmaaux} shows us that $u(t,\beta)=0$ for all $t\in[0,1]$ and $\beta\in\mathcal{F}$. In particular for any $\nu\in\mathcal{C}$ we have obtained that $u(t,\nu(1))=u(t,\nu(0))=0$, $t\in[0,1]$, and hence $u(t,\nu(k))=0$, for all $k\geq 0$ and $t\in[0,1]$.
\end{proof}
\begin{Cor}\label{cor43}
Let $u(t,\alpha)$ be a solutoin of \eqref{evouproblem}, such that for all $\nu\in\mathcal{C}\setminus\{\nu_0\}$ and some $\epsilon>0$ 
\begin{equation*}
|u(t,\nu(k))|\leq C\Big(\frac{\E}{(2+\epsilon)k}\Big)^k,\qquad k>0,\quad t\in\{0,1\}
\end{equation*}
If in addition $[\mathcal{B}]=\mathcal{F}$, where $\mathcal{B}=\cup_{j=0,1}\cup_{\nu\in\mathcal{C}\setminus\{\nu_0\}}\{\nu(j)\}$ and $\mathcal{F}=\mathcal{A}_1\cup\{\nu(1)\}_{\nu\in\mathcal{C}}$, then $u(t,\alpha)=0$ for all $\alpha\in\mathcal{A}$ and $t\in[0,1]$.
\end{Cor}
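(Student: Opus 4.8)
The plan is to deduce the statement from Corollary~\ref{cor42}, after first showing that every channel other than $\nu_0$ carries the zero solution. First I would observe that, for each fixed $\nu\in\mathcal{C}\setminus\{\nu_0\}$, the decay bound assumed on $|u(t,\nu(k))|$ is exactly hypothesis~\eqref{udecay} of Theorem~\ref{thdecayuniq} with $\nu$ in the role of $\nu_0$. Applying Theorem~\ref{thdecayuniq} once for each such $\nu$ then gives $u(t,\nu(k))=0$ for all $k\geq 0$, all $t\in[0,1]$ and every $\nu\in\mathcal{C}\setminus\{\nu_0\}$. In particular $u(t,\nu(0))=u(t,\nu(1))=0$ for all these $\nu$, i.e.\ $u(t,\beta)=0$ for all $t\in[0,1]$ and all $\beta\in\mathcal{B}=\cup_{j=0,1}\cup_{\nu\in\mathcal{C}\setminus\{\nu_0\}}\{\nu(j)\}$.

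Next I would verify that Corollary~\ref{cor42} applies to this set $\mathcal{B}$ without modification. First, $\mathcal{B}\subset\mathcal{F}$, since $\nu(0)\in\mathcal{A}_1$ and $\nu(1)\in\{\nu(1)\}_{\nu\in\mathcal{C}}$ for every $\nu$. Moreover, in $\mathcal{B}$ the vertex $\nu(1)$ belongs to $\mathcal{B}$ precisely when $\nu(0)$ does --- namely, exactly for $\nu\neq\nu_0$ --- so no vertex $\nu(1)$ is deleted by the reduction performed in Corollary~\ref{cor42}, and the reduced set occurring there coincides with $\mathcal{B}$ itself. Since $[\mathcal{B}]=\mathcal{F}$ by hypothesis and $u$ vanishes on $\mathcal{B}$ for all $t\in[0,1]$ by the previous step, Corollary~\ref{cor42} yields $u(t,\alpha)=0$ for all $\alpha\in\mathcal{A}$ and $t\in[0,1]$, which is the desired conclusion.

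The only step carrying real content is the first one, the channel-by-channel application of Theorem~\ref{thdecayuniq}; once all channels except $\nu_0$ are known to vanish, the rest is the extension bookkeeping already packaged in Lemma~\ref{lemmaaux} and Corollary~\ref{cor42}, which propagates the vanishing from those channels and from the inner part $\mathcal{A}_1$ outward along the surviving channel $\nu_0$ into all of $\mathcal{A}$. The point I would be most careful with is the identification of $\mathcal{B}$ with its reduced version in Corollary~\ref{cor42}: it is precisely the presence of $\nu(0)$ in $\mathcal{B}$ alongside $\nu(1)$ that prevents any $\nu(1)$ from being discarded, and hence that makes $[\mathcal{B}]=\mathcal{F}$ the right hypothesis to feed into Corollary~\ref{cor42}.
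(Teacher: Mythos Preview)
Your proposal is correct and follows essentially the same route as the paper: apply Theorem~\ref{thdecayuniq} to each channel $\nu\in\mathcal{C}\setminus\{\nu_0\}$ to obtain vanishing on $\mathcal{B}$, and then invoke Corollary~\ref{cor42}. Your additional remark that $\mathcal{B}'=\mathcal{B}$ (because $\nu(0)\in\mathcal{B}$ whenever $\nu(1)\in\mathcal{B}$) is a detail the paper leaves implicit but which is indeed needed to feed the hypothesis $[\mathcal{B}]=\mathcal{F}$ directly into Corollary~\ref{cor42}.
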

\begin{proof}
By theorem \ref{thdecayuniq}, $u(t,\nu(k))=0$ for all $\nu\neq\nu_0$, $\nu\in\mathcal{C}$ and $k\geq 0$, $t\in[0,1]$. Thus $u(t,\beta)=0$ for all $\beta\in\mathcal{B}$, $t\in[0,1]$ and using corollary \ref{cor42} the result follows
\end{proof}
\begin{Rem}
\begin{enumerate}
\item
Notice that  in the proof of the theorem \ref{Thuniqgraph} it is not necessary that the matrix $L(\cdot,\cdot)$ has  to be positive, only symmetric and real valued.
\item
In corollary \ref{cor43}, if the solution $u(t,\alpha)$ is zero along all the channels $\nu\in\mathcal{C}$, then $u(t,\alpha)$ will be trivial if $[\cup_{\nu\in\mathcal{C}}\{\nu(0)\}]=\mathcal{F}$, where $\mathcal{F}=\mathcal{A}_1$.
\end{enumerate}
\end{Rem}

\medskip

%%%%%%%%
\noindent 
{\bf Acknowledgments.}
I am indebted to  Yura Lyubarskii  to set the problem and  for productive discussions on this topic.

\end{document}